\documentclass{amsart}

\usepackage{amsmath,amsfonts,amssymb}
\usepackage{color}
\usepackage{tikz}
\usepackage{ytableau}
\usepackage{float}
\usetikzlibrary{arrows.meta,chains,matrix,decorations.pathreplacing, arrows}
\usetikzlibrary{patterns}
\usepackage{mathabx}
\usepackage{comment}

\newtheorem*{theorem*}{Theorem}

\newtheorem{theorem}{Theorem}[section]
\newtheorem{lemma}[theorem]{Lemma}
\newtheorem{proposition}[theorem]{Proposition}
\newtheorem{corollary}[theorem]{Corollary}

\theoremstyle{definition}
\newtheorem{definition}[theorem]{Definition}

\theoremstyle{remark}

\theoremstyle{observation}

\numberwithin{equation}{section}

\newlength\cellsize \setlength\cellsize{10\unitlength}

\savebox2{%
\begin{picture}(10,10)
\put(0,0){\line(1,0){10}}
\put(0,0){\line(0,1){10}}
\put(10,0){\line(0,1){10}}
\put(0,10){\line(1,0){10}}
\end{picture}}

\newcommand\cellify[1]{\def\thearg{#1}\def\nothing{}%
\ifx\thearg\nothing\vrule width0pt height\cellsize depth0pt%
  \else\hbox to 0pt{\usebox2\hss}\fi%
  \vbox to 10\unitlength{\vss\hbox to 10\unitlength{\hss$#1$\hss}\vss}}

\newcommand\tableau[1]{\vtop{\let\\=\cr
\setlength\baselineskip{-10000pt}
\setlength\lineskiplimit{10000pt}
\setlength\lineskip{0pt}
\halign{&\cellify{##}\cr#1\crcr}}}

\savebox7{
\begin{picture}(10,10)
  \put(5,5){\circle{9.4}}
\end{picture}}

\newcommand{\cirfy}[1]{\def\thearg{#1}\def\nothing{}%
\ifx\thearg\nothing\vrule width0pt height\cellsize depth0pt%
  \else\hbox to 0pt{\usebox7\hss}\fi%
  \vbox to 10\unitlength{\vss\hbox to 10\unitlength{\hss$#1$\hss}\vss}}

\newcommand\cirtab[1]{\vtop{\let\\=\cr
\setlength\baselineskip{-10000pt}
\setlength\lineskiplimit{10000pt}
\setlength\lineskip{0pt}
\halign{&\cirfy{##}\cr#1\crcr}}}

\input xy
\usepackage[all]{xy}
\xyoption{line}
\xyoption{arrow}
\xyoption{color}
\SelectTips{cm}{}

\usepackage{tikz}
\usetikzlibrary{decorations.markings}
\usetikzlibrary{decorations.pathreplacing}
\newcommand{\hackcenter}[1]{
 \xy (0,0)*{#1}; \endxy}

\tikzstyle directed=[postaction={decorate,decoration={markings,
    mark=at position #1 with {\arrow{>}}}}]
\tikzstyle rdirected=[postaction={decorate,decoration={markings,
    mark=at position #1 with {\arrow{<}}}}]

\usetikzlibrary{calc}
\usepackage{relsize}

\tikzset{fontscale/.style = {font=\relsize{#1}}
    }

\setlength{\abovecaptionskip}{1ex}

\newcommand{\newword}[1]{\textbf{\emph{#1}}}

\newcommand{\g}{\ensuremath\mathfrak{g}}
\newcommand{\fsl}{\ensuremath\mathfrak{sl}}
\newcommand{\asl}{\ensuremath\widehat{\mathfrak{sl}}}

\newcommand{\B}{\mathcal{B}}
\newcommand{\aB}{\widetilde{\mathcal{B}}}
\newcommand{\wt}{\ensuremath\mathrm{wt}}
\newcommand{\D}{\mathcal{E}}

\newcommand{\key}{\ensuremath\kappa}
\newcommand{\SSKD}{\ensuremath\mathrm{SSKD}}

\newcommand{\maj}{\ensuremath\mathrm{maj}}


\title[Affine Demazure crystals]{Affine Demazure crystals for specialized nonsymmetric Macdonald polynomials}  

\author[S. Assaf]{Sami Assaf}
\address{Department of Mathematics, University of Southern California, 3620 S. Vermont Ave., Los Angeles, CA 90089-2532, U.S.A.}
\email{shassaf@usc.edu}
\thanks{S.A. supported in part by NSF DMS-1763336. S.K. supported in part by NSF DMS-1802328.}

\author[N. Gonz\'{a}lez]{Nicolle Gonz\'{a}lez}
\address{UCLA Department of Mathematics, Los Angeles, CA 90095-1555, U.S.A.}
\email{nicolle@math.ucla.edu}

\thanks{S.A. supported in part by NSF DMS-1763336.}

\keywords{Affine Demazure crystal, affine Demazure character, nonsymmetric Macdonald polynomial}

\begin{document}

\begin{abstract}
  We give a crystal-theoretic proof that nonsymmetric Macdonald polynomials specialized to $t=0$ are affine Demazure characters. We explicitly construct an affine Demazure crystal on semistandard key tabloids such that removing the affine edges recovers the finite Demazure crystals constructed earlier by the authors. We also realize the filtration on highest weight modules by Demazure modules by defining explicit embedding operators which, at the level of characters, parallels the recursion operators of Knop and Sahi for specialized nonsymmetric Macdonald polynomials. Thus we prove combinatorially in type A that every affine Demazure module admits a finite Demazure flag.
\end{abstract}
\maketitle
%
\section{Introduction}
%
\label{sec:introduction}

Macdonald \cite{Mac88} defined an important family of polynomials that forms a basis of symmetric polynomials in $\mathbb{C}(q,t)[x_1,\ldots,x_n]$. Opdam \cite{Opd95} defined a nonsymmetric generalization of these polynomials, also discovered by Macdonald \cite{Mac96}, that form a basis of $\mathbb{C}(q,t)[x_1,\ldots,x_n]$. The symmetric Macdonald polynomials have deep connections to representation theory and geometry, and many new connections have been discovered by studying their nonsymmetric generalizations.

Sanderson \cite{San00} made the connection between nonsymmetric Macdonald polynomials and affine Demazure modules \cite{Dem74a} through the character formula stated by Demazure \cite{Dem74} and proved rigorously by Andersen \cite{And85}. Using the recurrence formula for nonsymmetric Macdonald polynomials discovered independently by Knop \cite{Kno97} and Sahi \cite{Sah96}, Sanderson proved the nonsymmetric Macdonald polynomials specialized at $t=0$ are affine Demazure characters. One of our main results is a crystal theoretic lift of Knop and Sahi's operators (specialized at $t=0$) as generators of affine Demazure crystals.

Haglund, Haiman and Loehr \cite{HHL08} gave a combinatorial formula for nonsymmetric Macdonald polynomials, also based on the recurrence of Knop and Sahi. Assaf \cite{Ass18} used this formula and weak dual equivalence \cite{Ass-W} to prove the nonsymmetric Macdonald polynomial specialized at $t=0$ is a nonnegative $q$-graded sum of finite Demazure characters. In this paper, we connect the affine and finite results for nonsymmetric Macdonald polynomials through crystals, bridging the combinatorics with the representation theoretic perspective of Sanderson. 

Kashiwara \cite{Kas91} combinatorialized certain highest weight modules through his study of crystals which he generalized to Demazure modules with Demazure crystals \cite{Kas93}. Demazure modules form a filtration of highest weight modules compatible with the Bruhat order of the corresponding Weyl group. This filtration descends to the appropriate crystals via Demazure operators and is realized on the characters by the recursion operators of Knop and Sahi. Kashiwara and Nakashima and, independently, Littelmann gave explicit tableaux models for finite type crystals \cite{KN94,Lit95}, and in a similar spirit Assaf and Schilling gave an explicit tableaux model for Demazure crystals in type A \cite{ASc18}. Assaf and Gonz\'{a}lez \cite{AG21} recently gave a crystal theoretic proof of Assaf's result \cite{Ass18} decomposing nonsymmetric Macdonald polynomials specialized at $t=0$ as a nonnegative $q$-graded sum of finite Demazure characters, leading to more explicit formulas.

In this paper, we give an explicit construction of affine Demazure crystals on semistandard key tabloids, the combinatorial objects that generate nonsymmetric Macdonald polynomials. This gives a new combinatorial proof of Sanderson's result that the nonsymmetric Macdonald polynomial specialized at $t=0$ is the affine Demazure character. Generalizing our earlier finite crystal construction \cite{AG21}, we define affine edges to the finite Demazure crystal on semistandard key tabloids. We provide a realization of the Bruhat filtration on Demazure crystals via embedding operators, which correspond to a combinatorial analogue of the Demazure operators and recover Knop and Sahi's operators (at $t=0$) at the level of characters. As a corollary, we give explicit formulas for affine Demazure characters as nonnegative graded sums of finite Demazure characters. 

Our combinatorial results are type A specific.  Cherednik \cite{Che95} uniformly generalized nonsymmetric Macdonald polynomials to all types. Similarly,  Kumar \cite{Kum87} gave general type formulas for Demazure characters. Inspired by Sanderson, Ion \cite{Ion03} proved a general connection between nonsymmetric Macdonald polynomials and affine Demazure characters.  Kumar conjectures our type A result holds in greater generality,  in particular, that every affine Demazure module admits a finite Demazure flag.

%
\section{Demazure crystals}
%
\label{sec:crystals}

Given a complex, semi-simple Lie algebra $\g$ with dominant integral weights $P^{+}\subset P$, there is a unique irreducible highest weight $\g$-module $V^{\lambda}$ for each $\lambda\in P^{+}$. For $W$ the Weyl group of $\g$, each weight space $V^{\lambda}_w$ of weight $w \cdot \lambda$ is one-dimensional. The \newword{Demazure modules} $D_{w \cdot \lambda}$ are the $\mathfrak{b}$-submodules generated by the action of a Borel subalgebra $\mathfrak{b}$ on $V^{\lambda}_w$. The Demazure modules form a filtration of $V^{\lambda}$ compatible with Bruhat order on $W$, so that $w \leq w'$ if and only if $D_{w \cdot \lambda} \subseteq D_{w' \cdot \lambda}$. Kashiwara proved each irreducible module $V^{\lambda}$ and Demazure module $D_{w \cdot \lambda}$ has a crystal basis encoding important combinatorial data including its character. 

A \newword{crystal basis} for a $\g$ consists of a nonempty set $\B$ not containing $0$ and \newword{crystal operators} $e_i, f_i  :  \B \rightarrow \B \cup \{0\}$ such that $e_i(b)=b^{\prime}$ if and only if $f_i(b^{\prime}) = b$ for $b,b^{\prime}\in\B$ satisfying certain conditions. These conditions can be described in terms of the three maps $\wt,\varepsilon,\varphi : \B \rightarrow P$, called the weight map and string lengths.

For $\g = \fsl_n$ or $\asl_n$, the integral weights are weak compositions $\alpha$ of length $n$, and the \newword{degree} of $\alpha\in P$ is $\alpha_1+\alpha_2+\cdots+\alpha_n$. The \newword{weight map} satisfies $\wt(b^{\prime}) = \wt(b) + (\mathbf{e}_i-\mathbf{e}_{i+1})$ whenever $e_i(b)=b^{\prime}$ for $1 \leq i < n$ (here $\mathbf{e}_k$ is the composition with $1$ in position $k$ and all other entries $0$). For $\g = \asl_n$, in addition we have $\wt(b^{\prime}) = \wt(b) + (\mathbf{e}_{n}-\mathbf{e}_{1})$ whenever $e_0(b)=b^{\prime}$. The \newword{string lengths} satisfy $\varphi_i(b) - \varepsilon_i(b) = \wt(b)_i - \wt(b)_{i+1}$ for $i>0$ and $\varphi_0(b) - \varepsilon_0(b) = \wt(b)_n - \wt(b)_{1}$, and are given explicitly by 
\begin{eqnarray*}
  \varepsilon_i(b) = \max\{j \geq 0 \mid e_i^j(b) \neq 0 \}
  & \text{and} &
  \varphi_i(b) = \max\{j \geq 0 \mid f_i^j(b) \neq 0 \}.
\end{eqnarray*}
Since the crystal basis is a \emph{basis}, the \newword{dimension} of a crystal is the size of the set $\B$. We often abuse notation by referring to the \newword{crystal data} $(\B,\wt,\{e_i\},\{f_i\})$ simply by the set $\B$ when the weight map and crystal operators are already defined. 

\begin{figure}[ht]
  \begin{displaymath}
    \begin{tikzpicture}[xscale=1.5,yscale=1]
      \node at (0,0)   (a) {$\tableau{1}$};
      \node at (1,0)   (b) {$\tableau{2}$};
      \node at (2,0)   (c) {$\tableau{3}$};
      \node at (3,0)   (d) {$\cdots$};
      \node at (4,0)   (e) {$\tableau{n}$};
      \draw[thick,color=blue  ,->] (a) -- (b) node[midway,above] {$\scriptstyle 1$} ;
      \draw[thick,color=purple,->] (b) -- (c) node[midway,above] {$\scriptstyle 2$} ;
      \draw[thick,color=red   ,->] (c) -- (d) node[midway,above] {$\scriptstyle 3$} ;
      \draw[thick,color=orange,->] (d) -- (e) node[midway,above] {$\scriptstyle n-1$} ;
      \draw[thick,color=violet,->] (e.south) to[out=200,in=340] node[above] {$\scriptstyle 0$} (a.south)  ;
    \end{tikzpicture}
  \end{displaymath}
  \caption{\label{fig:standard}The standard crystal $\aB(n)$ for $\asl_n$; removing the $0$-edge gives the standard crystal $\B(n)$ for ${\fsl}_n$.}
\end{figure}

The \newword{standard crystal} $\B(n)$ for $\fsl_n$ ($\aB(n)$ for $\asl_n$) has crystal basis $\left\{\raisebox{-0.3\cellsize}{$\tableau{i}$}\mid 1 \leq i < n \right\}$, weight map $\wt\left(\,\raisebox{-0.3\cellsize}{$\tableau{i}$}\,\right) = \mathbf{e}_i$, and finite (and affine) crystal operators as shown in Fig.~\ref{fig:standard}, where we draw a directed $i$-edge from $b'$ to $b$ if $f_i(b')=b$.

Given two crystals $\B_1$ and $\B_2$, the \newword{tensor product} $\B_1 \otimes \B_2$ is the set $\B_1 \otimes \B_2$ with $\wt(b_1 \otimes b_2) = \wt(b_1) + \wt(b_2)$ and crystal operators $e_i, f_i$ defined by
\begin{equation}
  f_i(b_1 \otimes b_2) = \left\{ \begin{array}{rl}
    f_i(b_1) \otimes b_2 & \mbox{if } \varepsilon_i(b_2) < \varphi_i(b_1), \\
    b_1 \otimes f_i(b_2) & \mbox{if } \varepsilon_i(b_2) \geq \varphi_i(b_1).
  \end{array} \right.
\end{equation}

For example, Fig.~\ref{fig:tensor} shows the tensor product of two copies of the standard crystal $\aB(3)$ which, as an $\asl_n$-crystal, is connected. However, as an $\fsl_n$-crystal, we ignore the $0$ edges resulting in two connected components, one of dimension $6$ with highest weight $(2,0,0)$ and the other of dimension $3$ with highest weight $(1,1,0)$. 

\begin{figure}[ht]
  \begin{center}
    \begin{tikzpicture}[xscale=2,yscale=1.2]
      \node at (1,2)   (U11)  {$\cellify{1}\otimes\cellify{1}$};
      \node at (2,2)   (U21)  {$\cellify{2}\otimes\cellify{1}$};
      \node at (3,2)   (U31)  {$\cellify{3}\otimes\cellify{1}$};
      \node at (1,1)   (U12)  {$\cellify{1}\otimes\cellify{2}$};
      \node at (2,1)   (U22)  {$\cellify{2}\otimes\cellify{2}$};
      \node at (3,1)   (U32)  {$\cellify{3}\otimes\cellify{2}$};
      \node at (1,0)   (U13)  {$\cellify{1}\otimes\cellify{3}$};
      \node at (2,0)   (U23)  {$\cellify{2}\otimes\cellify{3}$};
      \node at (3,0)   (U33)  {$\cellify{3}\otimes\cellify{3}$};
      \draw[thick,blue  ,->](U11) -- (U21) node[midway,above]{$\scriptstyle 1$};
      \draw[thick,blue  ,->](U21) -- (U22) node[midway,left] {$\scriptstyle 1$};
      \draw[thick,blue  ,->](U31) -- (U32) node[midway,left] {$\scriptstyle 1$};
      \draw[thick,blue  ,->](U13) -- (U23) node[midway,above]{$\scriptstyle 1$};
      \draw[thick,purple,->](U21) -- (U31) node[midway,above]{$\scriptstyle 2$};
      \draw[thick,purple,->](U22) -- (U32) node[midway,above]{$\scriptstyle 2$};
      \draw[thick,purple,->](U32) -- (U33) node[midway,left] {$\scriptstyle 2$};
      \draw[thick,purple,->](U12) -- (U13) node[midway,left] {$\scriptstyle 2$};
      \draw[thick,violet,->](U32.south) to[out=200,in=340] node[above] {$\scriptstyle 0$} (U12.south) ;
      \draw[thick,violet,->](U33.south) to[out=200,in=340] node[above] {$\scriptstyle 0$} (U13.south) ;
      \draw[thick,violet,->](U13.west) to[out=100,in=260] node[left]   {$\scriptstyle 0$} (U11.west) ;
      \draw[thick,violet,->](U23.west) to[out=100,in=260] node[left]   {$\scriptstyle 0$} (U21.west) ;
    \end{tikzpicture}
    \caption{\label{fig:tensor}Tensor product of two standard crystals $\aB(3)$, giving the degree $2$ affine crystal for $\asl_3$.}
  \end{center}
\end{figure}
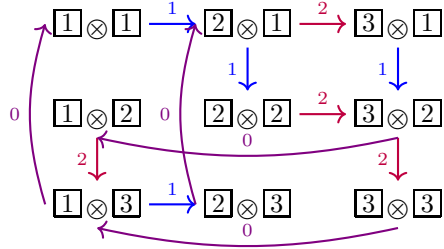

For $\g = \fsl_n$ or $\asl_n$, the Weyl group $W$ for $\g$ acts on $\alpha \in P$ by
\begin{eqnarray} \label{eq:W-action}
  s_i \cdot (\alpha_1,\ldots,\alpha_n) & = & (\alpha_1,\ldots,\alpha_{i-1}, \alpha_{i+1}, \alpha_i, \alpha_{i+2},\ldots,\alpha_n) , \\
  s_0 \cdot (\alpha_1,\ldots,\alpha_n) & = & (\alpha_n+1,\alpha_2,\ldots,\alpha_{n-1},\alpha_1-1) ,
\end{eqnarray}
where the $s_i$ are the simple reflections that generate $W$. We extend Bruhat order to $P$ by writing each $\alpha\in P$ as $w \cdot \lambda$ for a unique minimum length $w\in W$ and unique $\lambda$, where in the finite case $\lambda\in P^+$ and in the affine case, $\lambda$ of degree $k$ is
\begin{equation}\label{eq:eta}
  \eta_{n,k} = (\overbrace{m+1,\ldots,m+1}^r,\overbrace{m,\ldots,m}^{n-r}),
\end{equation}
where $k$ can be expressed uniquely as $k = m n + r$ with $m\geq 0$ and $0 \leq r < n$. 

A \newword{highest weight element} is any $u\in\B$ such that $e_i(u)=0$ for all $1 \leq i < n$. From the relations between the crystal operators and the weight map and string lengths, it follows that each highest weight element $u$ satisfies $\wt(u) \in P^{+}$ and each connected component of a finite crystal has a unique highest weight element. 

Let $\lambda\in P^{+}$ be a dominant integral weight of degree $k$. For $\g = \fsl_n$, the crystal basis $\B(\lambda)$ for the irreducible module $V^{\lambda}$ is any connected component of $\B(n)^{\otimes k}$ with highest weight $\lambda$. For $\g = \asl_n$, the finite-dimensional crystal basis $\aB(\lambda)$ for the finite-dimensional $V^{\lambda}$ is the connected crystal $\aB(n)^{\otimes k}$. For example, Fig.~\ref{fig:tensor} shows the affine crystal $\aB(2,0,0)$ for $\asl_3$ and, ignoring the $0$ edges, the two finite crystals $\B(2,0,0)$ and $\B(1,1,0)$ for $\fsl_3$.

To realize the crystals for the Demazure modules, we consider the \newword{Demazure operators} $\mathfrak{D}_i$ defined on any subset $X$ of a crystal $\B$ by
\begin{equation}
  \mathfrak{D}_i X = \{ b \in \B \mid e_i^j(b) \in X \mbox{ for some } j \geq 0 \} .
  \label{e:D}
\end{equation}
These operators satisfy the Coxeter relations for $W$, and so we set $\mathfrak{D}_w = \mathfrak{D}_{i_1} \cdots \mathfrak{D}_{i_{\ell}}$ for any reduced expression $s_{i_1} \cdots s_{i_{\ell}}$ for $w\in W$.

For $\lambda\in P^{+}$ and $w\in W$, the \newword{(affine) Demazure crystal} $\B_w(\lambda)$ or $\aB_w(\lambda)$ for the Demazure module $D_{w \cdot \lambda}$ is 
\begin{equation}
  \B_w(\lambda) = \mathfrak{D}_{w} \{ u_\lambda \} \qquad
   \text{or} \qquad
  \aB_w(\lambda) = \mathfrak{D}_{w} \{ \tilde{u}_\lambda \}
\end{equation}
where $u_{\lambda}$ is the unique highest weight element in $\B(\lambda)$ of weight $\lambda$, and $\tilde{u}_{\lambda}$ is the highest weight element
\begin{equation}\label{eq:u-tilde}
  \left(\cellify{1} \otimes \cdots \otimes \cellify{n}\right)^{\otimes m} \otimes \cellify{1} \otimes \cdots \otimes \cellify{r}
\end{equation}
of $\aB(\lambda)$, where $\lambda$ has degree $k = mn+r$ with $m\geq 0$ and $0 \leq r < n$.

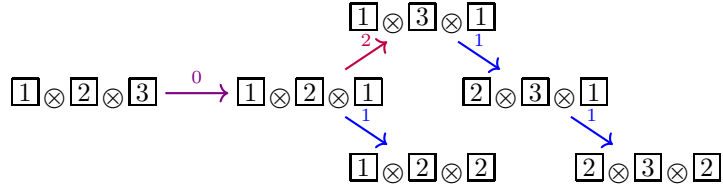
\begin{figure}[ht]
  \begin{displaymath}
    \begin{tikzpicture}[xscale=1.5,yscale=1]
      \node at (-1,1)  (A) {$\cellify{1}\otimes\cellify{2}\otimes\cellify{3}$};
      \node at (1,1)   (B) {$\cellify{1}\otimes\cellify{2}\otimes\cellify{1}$};
      \node at (2,2)   (C) {$\cellify{1}\otimes\cellify{3}\otimes\cellify{1}$};
      \node at (2,0)   (D) {$\cellify{1}\otimes\cellify{2}\otimes\cellify{2}$};
      \node at (3,1)   (E) {$\cellify{2}\otimes\cellify{3}\otimes\cellify{1}$};
      \node at (4,0)   (F) {$\cellify{2}\otimes\cellify{3}\otimes\cellify{2}$};
      \draw[thick,color=violet,->] (A) -- (B) node[midway,above] {$\scriptstyle 0$} ;
      \draw[thick,color=blue  ,->] (B) -- (D) node[midway,above] {$\scriptstyle 1$} ;
      \draw[thick,color=purple,->] (B) -- (C) node[midway,above] {$\scriptstyle 2$} ;
      \draw[thick,color=blue  ,->] (C) -- (E) node[midway,above] {$\scriptstyle 1$} ;
      \draw[thick,color=blue  ,->] (E) -- (F) node[midway,above] {$\scriptstyle 1$} ;
    \end{tikzpicture}
  \end{displaymath}
  \caption{\label{fig:dem-crystal}The affine Demazure crystal $\aB_{s_1 s_2 s_0}(1,1,1)$.}
\end{figure}

For example, Fig.~\ref{fig:dem-crystal} shows the affine Demazure crystal $\aB_w(\lambda)$ for $\asl_3$ with $\lambda=(1,1,1)$ of degree $3$ and $w = s_1 s_2 s_0$, where $\tilde{u}_{\lambda}$ is the leftmost element.

The Demazure crystals $\B_w(\lambda)$ and $\aB_w(\lambda)$ form a filtration of the highest weight crystal $\B(\lambda)$ and $\aB(\lambda)$ compatible with Bruhat order in exactly the same way Demazure modules form a filtration of irreducible modules. One of our main results is to give an explicit construction of affine Demazure crystals on tableaux-like objects that avoids the construction of the tensor product. Moreover, our direct construction circumvents the iterative use of Demazure operators, but we nonetheless give nested crystal embeddings that realize the filtration under the Bruhat order.

%
\section{Tabloid crystals}
%
\label{sec:tabloids}

For $\g = \fsl_n$, the character of the irreducible module $V^{\lambda}$ is the \newword{Schur polynomial} $s_{\lambda}(x_1,\ldots,x_n)$, defined combinatorially as the generating polynomial of semi-standard Young tableaux. Thus tableaux are the natural choice for the underlying set of the crystal basis $\B(\lambda)$ as realized by Kashiwara and Nakashima \cite{KN94} and Littelmann \cite{Lit95}. More generally, the \newword{Demazure character} $\key_{w \cdot \lambda}(x_1,\ldots,x_n)$ of the Demazure module $V_w^{\lambda}$ can be defined combinatorially as the generating polynomial of semi-standard key tableaux, making this a natural choice for the underlying set of the Demazure crystal basis $\B_w(\lambda)$ as realized by Assaf and Schilling \cite{ASc18}.

For $\g = \asl_n$, Sanderson \cite{San00} proved the graded character of the module $V^{\lambda}$ is (up to rescaling) the specialized symmetric Macdonald polynomial $P_{\lambda}(x_1,\ldots,x_n;q,0)$, using notation from \cite{Mac88}. The character of the Demazure module $V_w^{\lambda}$ is the specialized nonsymmetric Macdonald polynomial $E_{w\cdot\lambda}(x_1,\ldots,x_n;q,0)$, using notation from \cite{HHL08}. The specialized nonsymmetric Macdonald polynomials can be defined combinatorially as the generating polynomial of \emph{semi-standard key tabloids}, defined below following notation in \cite{Ass18,AG21}, and so this is the natural choice for the underlying set of the Demazure crystal basis $\aB_w(\lambda)$.

The \newword{diagram} of a weak composition $\alpha$ has $\alpha_i$ cells left-justified in row $i$, indexed in coordinate notation with row $1$ on the bottom and column $1$ on the left. A \newword{filling} of a diagram is an assignment of positive integers as entries of cells of the diagram. Two cells are \newword{attacking} if they lie in the same column or lie in adjacent columns with the cell on the left strictly higher than the cell on the right. A filling is \newword{non-attacking} if no two cells with the same entry are attacking. The \newword{basement cells} lie to the left of the first column and have entry equal to their row index. All fillings in this paper are non-attacking, including the basement cells.

A \newword{triple} is a collection of three cells, possibly including basement cells, with two row adjacent and either (Type I) the third cell is above the left and the lower row is strictly longer, or (Type II) the third cell is below the right and the higher row is weakly longer. The \newword{orientation} of a triple is determined by reading the entries of the cells from smallest to largest, where a basement cell has entry equal to its row index and if two cells have equal entry we regard the one on the right as smaller. A \newword{co-inversion triple} is a Type I triple oriented counterclockwise or a Type II triple oriented clockwise, as illustrated in Fig.~\ref{fig:inv}. 

\begin{figure}[ht]
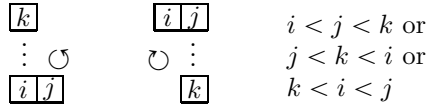

  \begin{displaymath}
      \begin{array}{l}
        \tableau{ k } \\[-0.5\cellsize] \hspace{0.4\cellsize} \vdots \hspace{0.5\cellsize} \circlearrowleft \\ \tableau{ i & j } \\
        \mathrm{Type \ I}
      \end{array}\hspace{2\cellsize}
      \begin{array}{r}
        \tableau{ i & j } \\[-0.5\cellsize] \circlearrowright \hspace{0.5\cellsize} \vdots \hspace{0.4\cellsize} \\ \tableau{ k } \\
        \mathrm{Type \ II}
      \end{array}\hspace{2\cellsize}
      \begin{array}{l}
        i < j < k \text{ or} \\
        j < k < i \text{ or} \\
        k < i < j
      \end{array}
  \end{displaymath}
  \caption{\label{fig:inv}The positions and orientation for co-inversion triples.}
\end{figure}

Given a weak composition $\alpha$, a \newword{semistandard key tabloid} of shape $\alpha$ is a non-attacking filling of the diagram of $\alpha$ with no co-inversion triples. Denote the set of semistandard key tabloids of shape $\alpha$ by $\SSKD(\alpha)$.

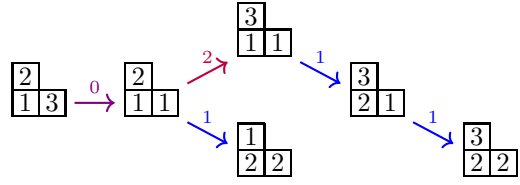
\begin{figure}[ht]
  \begin{displaymath}
    \begin{tikzpicture}[xscale=1.5,yscale=0.8]
      \node at (0,1)   (A) {$\vline\tableau{2 \\ 1 & 3 \\ & }$};
      \node at (1,1)   (B) {$\vline\tableau{2 \\ 1 & 1 \\ & }$};
      \node at (2,2)   (C) {$\vline\tableau{3 \\ 1 & 1 \\ &}$};
      \node at (2,0)   (D) {$\vline\tableau{1 \\ 2 & 2 \\ & }$};
      \node at (3,1)   (E) {$\vline\tableau{3 \\ 2 & 1 \\ & }$};
      \node at (4,0)   (F) {$\vline\tableau{3 \\ 2 & 2 \\ & }$};
      \draw[thick,color=violet,->] (A) -- (B) node[midway,above] {$\scriptstyle 0$} ;
      \draw[thick,color=blue  ,->] (B) -- (D) node[midway,above] {$\scriptstyle 1$} ;
      \draw[thick,color=purple,->] (B) -- (C) node[midway,above] {$\scriptstyle 2$} ;
      \draw[thick,color=blue  ,->] (C) -- (E) node[midway,above] {$\scriptstyle 1$} ;
      \draw[thick,color=blue  ,->] (E) -- (F) node[midway,above] {$\scriptstyle 1$} ;
    \end{tikzpicture}
  \end{displaymath}
  \caption{\label{fig:mac-crystal}The affine Demazure crystal on $\SSKD(0,2,1)$.}
\end{figure}

The \newword{weight} of a semistandard key tabloid $T$ is the weak composition $\wt(T)$ whose $i^{th}$ part is the number of entries equal to $i$. The \newword{column reading word} $w(T)$ of a semistandard key tabloid $T$ is the word obtained by reading the entries of $T$ up the columns from left to right. For instance, the reading word of the leftmost tabloid in Fig.~\ref{fig:mac-crystal} is $123$. Each tabloid is uniquely determined by its reading word.

In \cite{AG21} the authors define finite raising and lowering operators on semi-standard key tabloids of shape $\alpha$ giving rise to a finite Demazure crystal structure on $\SSKD(\alpha)$. We recall these operators here; see \cite{AG21}(Section 5) for further details. 

\begin{definition}\cite{AG21}
  For $T \in \SSKD(\alpha)$ and $1 \leq i <n$, we \newword{i-pair} the cells of $T$ with entries $i$ or $i+1$ as follows: $i$-pair $i$ and $i+1$ whenever they occur in the same column, and then iteratively $i$-pair an unpaired $i+1$ with an unpaired $i$ to its left whenever all entries $i$ or $i+1$ lying between them are already $i$-paired.
  \label{def:pair}
\end{definition}

\begin{definition}\cite{AG21} \label{def:lower-key}
  For $i \geq 1$, the \newword{lowering operator} $f_i$ acts on $T\in\SSKD(\alpha)$ by
  \begin{itemize}
  \item if all entries $i$ of $T$ are $i$-paired or if the leftmost unpaired $i$ is in row $i$ and all columns to its left have an $i$ in the same row and an $i+1$ above, then $f_i(T)=0$;
  \item otherwise, $f_i$ changes the leftmost unpaired $i$ to $i+1$ and
    \begin{itemize}
    \item swaps the entries $i$ and $i+1$ in each of the consecutive columns left of this entry that have an $i$ in the same row and an $i+1$ above, and
    \item swaps the entries $i$ and $i+1$ in each of the consecutive columns right of this entry that have an $i$ in the same row and an $i+1$ below.
    \end{itemize}
  \end{itemize}
\end{definition}

\begin{figure}[ht]
  \begin{center}
    \begin{tikzpicture}[xscale=3.75,yscale=1]
      \node at (0,0) (A) {$\vline\tableau{
          5 & \mathbf{\color{red}3} & 1 \\
          \mathbf{\color{red}3} & \mathbf{\color{red}2} & \cirfy{\mathbf{\color{blue}2}} & \mathbf{\color{red}2} & 5 & 5 & 5 & 2 \\ \\
          \mathbf{\color{red}2} & 1 & 4 & 4 & \mathbf{\color{red}3} & 1 & 1 \\ & }$};
      \node at (1,0) (B) {$\vline\tableau{
          5 & \mathbf{\color{red}2} & 1 \\
          \mathbf{\color{red}3} & \mathbf{\color{red}3} & 3 & \mathbf{\color{red}2} & 5 & 5 & 5 & \cirfy{\mathbf{\color{blue}2}} \\ \\
          \mathbf{\color{red}2} & 1 & 4 & 4 & \mathbf{\color{red}3} & 1 & 1 \\ & }$};
      \node at (2,0) (C) {$\vline\tableau{
          5 & \mathbf{\color{red}2} & 1 \\
          \mathbf{\color{red}3} & \mathbf{\color{red}3} & 3 & \mathbf{\color{red}2} & 5 & 5 & 5 & 3 \\ \\
          \mathbf{\color{red}2} & 1 & 4 & 4 & \mathbf{\color{red}3} & 1 & 1 \\ & }$};
      \node at (2.65,0) (D) {$0$};
      \draw[thick,color=purple,->] (A) -- (B) node[midway,above] {$\scriptstyle f_2$} ;
      \draw[thick,color=purple,->] (B) -- (C) node[midway,above] {$\scriptstyle f_2$} ;
      \draw[thick,color=purple,->] (C) -- (D) node[midway,above] {$\scriptstyle f_2$} ;      
    \end{tikzpicture}
  \end{center}
  \caption{\label{fig:fSSKD}The $2$-string for a semistandard key tabloid, with $2$-paired letters in red and the leftmost unpaired $2$ circled.}
\end{figure}
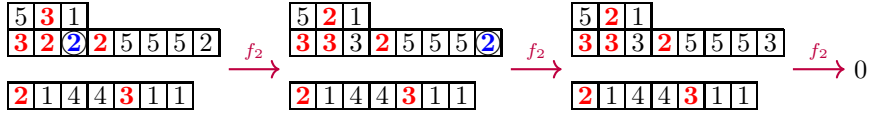

\begin{definition}\cite{AG21}  \label{def:raise-tabloid}
  For $i\geq 1$, the \newword{raising operator} $e_i$ acts on $T\in\SSKD(\alpha)$ by
  \begin{itemize}
  \item if all entries $i+1$ of $T$ are $i$-paired, then $e_i(T)=0$;
  \item otherwise, $e_i$ changes the rightmost unpaired $i+1$ to $i$ and
    \begin{itemize}
    \item swaps the entries $i$ and $i+1$ in each of the consecutive columns left of this entry that have an $i+1$ in the same row and an $i$ above, and
    \item swaps the entries $i$ and $i+1$ in each of the consecutive columns right of this entry that have an $i+1$ in the same row and an $i$ below.
    \end{itemize}
  \end{itemize}
\end{definition}

When nonzero, these operators are well-defined inverses of each other. 

\begin{theorem}\cite{AG21}
  For $1 \leq i <n$, the raising and lowering operators are well-defined maps $e_i,f_i : \SSKD(\alpha) \rightarrow \SSKD(\alpha) \cup \{0\}$ such that for $S,T\in\SSKD(\alpha)$, we have $e_i(S) = T$ if and only if $f_i(T) = S$.
  \label{thm:well-defined}
\end{theorem}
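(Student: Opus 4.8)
The plan is to argue entirely locally in the entries $i$ and $i+1$, since both $e_i$ and $f_i$ fix every cell whose entry lies outside $\{i,i+1\}$ and never move a cell out of its row: the only changes are the single entry change at the \emph{active cell} and vertical swaps of an $i,i+1$ pair within a column. Consequently the shape $a$ is preserved, and any triple or attacking pair among cells with entries outside $\{i,i+1\}$, or involving at most one such cell together with an unchanged neighbor, is unaffected. So it suffices to control the configuration of the $i$- and $(i+1)$-cells after applying the operator.

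First I would analyze the $i$-pairing of Definition~\ref{def:pair}. Reading the $i$- and $(i+1)$-entries up columns from left to right and treating each $i$ as an opening bracket and each $i+1$ as a closing bracket, the pairing is exactly the standard non-crossing bracket matching (the same-column pairs are matched first, but these are forced and do not disturb the matching of the rest). From this I extract the key structural fact: among unpaired cells the $(i+1)$'s occupy the leftmost columns and the $i$'s the rightmost, so the rightmost unpaired $i+1$ targeted by $e_i$ sits immediately left of the block of unpaired $i$'s, and dually for $f_i$. This shows the active cell is well-defined and positions $e_i$ and $f_i$ to act at a common boundary.

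Next, for well-definedness of $e_i$ (with $f_i$ handled by the mirror-image argument), I would verify the two defining conditions of a semistandard key tabloid. For the non-attacking condition: each vertical swap preserves the multiset of entries in its column, and the active column gains an $i$ in place of an $i+1$; since an unpaired $i+1$ has no $i$ in its column (such an $i$ would have been column-paired with it), no column acquires two $i$'s or two $(i+1)$'s. The cascade of swaps in the consecutive columns left and right of the active cell is precisely engineered so that no new pair of equal entries in adjacent columns is created with the left cell strictly higher than the right; I would check that the swap in each column eliminates exactly the attacking pair that the change in the neighboring column would otherwise create, and that the cascade terminates once the triggering configuration (an $i+1$ in the same row with an $i$ above or below) fails. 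For the co-inversion condition I would run through the triple types of Fig.~\ref{fig:inv}, checking each triple that contains a changed or swapped cell; using the orientation rule (with the right-cell-smaller tie-break and the basement convention) together with the row-length hypotheses distinguishing Type I (lower row strictly longer) from Type II (higher row weakly longer), I would confirm that the local reconfiguration never produces a counterclockwise Type I or clockwise Type II triple. This case analysis, especially tracking orientations through the chain of swaps and the interaction with basement cells, is where I expect the real work to lie and is the main obstacle.

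Finally, for the inverse relation $e_i(S)=T \iff f_i(T)=S$, I would use the structural fact above: after $e_i$ turns the rightmost unpaired $i+1$ of $S$ into an $i$, that cell becomes the leftmost unpaired $i$ of $T$, and the $i$-pairing of the remaining cells is unchanged. The consecutive columns that $e_i$ swaps left and right are exactly those that $f_i$, acting on $T$ at this leftmost unpaired $i$, is prescribed to swap back, because the triggering conditions for the two operators are exchanged by a single swap. Hence $f_i(T)=S$, and the symmetric argument gives the converse, so $e_i$ and $f_i$ are mutually inverse bijections between their nonzero domains.
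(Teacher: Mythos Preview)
This theorem is not proved in the present paper; it is quoted from \cite{AG} without argument. So there is no in-paper proof to compare against directly. The natural proxy is the paper's proof of the affine analog, Theorem~\ref{thm:aff-well-defined} together with the Proposition following Definition~\ref{def:affine-raise-tabloid}, and your outline does match that shape: restrict attention to cells with entries in $\{i,i+1\}$, run a case analysis on attacking pairs and on Type~I/Type~II triples, then use the bracket-matching structure of the pairing to deduce the mutual-inverse statement. Your inverse-relation paragraph is essentially the same argument the paper gives for $e_0,f_0$.

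The substantive gap is a missing lemma, not a wrong strategy. In the affine proof, every step of the co-inversion case analysis invokes Lemma~\ref{lem:pairing-rows}: whenever the operator swaps a paired $n$ and $1$ in a column, the row containing the $n$ is strictly (if the $1$ is above) or weakly (if the $1$ is below) longer than the row containing the $1$. The paper remarks that this is the affine version of Lemmas~5.6 and~5.7 of \cite{AG}, and those are presumably the backbone of the proof you are trying to reconstruct. This row-length comparison is what tells you, for each triple configuration, whether a swap is even possible and, when it is, which of Type~I or Type~II governs the triple; without it the orientation checks have no traction. You correctly flag the triple case analysis as ``the main obstacle,'' but you have not identified the tool that dissolves it. I would suggest formulating and proving the finite analog of Lemma~\ref{lem:pairing-rows} first, and then the case analysis you describe goes through along the lines of the proof of Theorem~\ref{thm:aff-well-defined}.
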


In an analogous manner, we extend these notions to the affine setting. 

\begin{definition}
  For $T \in \SSKD(\alpha)$, we \newword{affine $0$-pair} the cells of $T$ with entries $n$ or $1$ as follows: $0$-pair together $n$ and $1$ whenever they occur in the same column, and then iteratively $0$-pair an unpaired $n$ with an unpaired $1$ to its right whenever all entries $n$ or $1$ that lie between them are already $0$-paired.
  \label{def:0-pair}
\end{definition}

\begin{definition}  \label{def:affine-lowering-tabloid}
  The \newword{affine lowering operator} $f_0$ acts on $T \in \SSKD(\alpha)$ by
  \begin{itemize}
  \item if all entries $n$ of $T$ are $0$-paired or if the leftmost unpaired $n$ is in the leftmost column of $T$ and $\alpha$ has more than one nonzero part, then $f_0(T)=0$;
  \item otherwise, $f_0$ changes the leftmost unpaired $n$ to $1$ and
    \begin{itemize}
    \item swaps the entries $n$ and $1$ in each of the consecutive columns left of this entry that have an $n$ in the same row and a $1$ above, and
    \item swaps the entries $n$ and $1$ in each of the consecutive columns right of this entry that have an $n$ in the same row and a $1$ below.
    \end{itemize}
  \end{itemize}
\end{definition}

\begin{figure}[ht]
  \begin{center}
    \begin{tikzpicture}[xscale=3.75,yscale=1]
      \node at (0,0) (A) {$\vline\tableau{
          \mathbf{\color{red}5} & 2 & \cirfy{\mathbf{\color{blue}5}} \\
          3 & 3 & 3 & 2 & 5 & \mathbf{\color{red}5} & \mathbf{\color{red}5} & 2 \\ \\
          2 & \mathbf{\color{red}1} & 4 & 4 & 3 & \mathbf{\color{red}1} & \mathbf{\color{red}1} \\ & }$};
      \node at (1,0) (B) {$\vline\tableau{
          \mathbf{\color{red}5} & 2 & 1 \\
          3 & 3 & 3 & 2 & \cirfy{\mathbf{\color{blue}5}} & \mathbf{\color{red}5} & \mathbf{\color{red}5} & 2 \\ \\
          2 & \mathbf{\color{red}1} & 4 & 4 & 3 & \mathbf{\color{red}1} & \mathbf{\color{red}1} \\ & }$};
      \node at (2,0) (C) {$\vline\tableau{
          \mathbf{\color{red}5} & 2 & 1 \\ 
          3 & 3 & 3 & 2 & 1 & \mathbf{\color{red}1} & \mathbf{\color{red}1} & 2 \\ \\
          2 & \mathbf{\color{red}1} & 4 & 4 & 3 & \mathbf{\color{red}5} & \mathbf{\color{red}5} \\ & }$};
      \node at (2.65,0) (D) {$0$};
      \draw[thick,color=violet,->] (A) -- (B) node[midway,above] {$\scriptstyle f_0$} ;
      \draw[thick,color=violet,->] (B) -- (C) node[midway,above] {$\scriptstyle f_0$} ;
      \draw[thick,color=violet,->] (C) -- (D) node[midway,above] {$\scriptstyle f_0$} ;      
    \end{tikzpicture}
  \end{center}
  \caption{\label{fig:aSSKD}The $0$-string for a semistandard key tabloid, with $0$-paired letters in red and the leftmost unpaired $5$ circled.}
\end{figure}
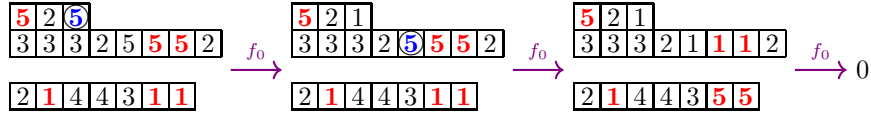

Notice if $f_0$ swaps entries $n$ and $1$ in $T$, then those two are in the same column and so are $0$-paired to one another both in $T$ and in $f_0(T)$. Thus since $f_0$ acts on the leftmost unpaired $n$, it neither creates nor destroys a $0$-pairing. Moreover, analogous to Lemmas 5.6 and 5.7 in \cite{AG21}, when $f_0$ swaps $0$-paired entries, we obtain information about their relative row lengths as follows.

\begin{lemma}\label{lem:pairing-rows}
  Let $T \in \SSKD(\alpha)$ such that $f_0(T) \neq 0$. Suppose cells $x,y$ are $0$-paired together in $T$, say with $x$ having entry $n$ in $T$ and entry $1$ in $f_0(T)$. Then
  \begin{itemize}
  \item if $y$ lies above $x$, then the row of $x$ is strictly longer than the row of $y$; 
  \item if $y$ lies below $x$, then the row of $x$ is weakly longer than the row of $y$. 
  \end{itemize}
\end{lemma}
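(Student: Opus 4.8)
The plan is to reduce to a single swapped column and then run a propagation argument along the row of the active cell, using the no-co-inversion and non-attacking conditions to pin down the row lengths. By the remark preceding the lemma, $x$ and $y$ occupy the same column, say column $c$, and since every entry that $f_0$ touches by a swap lies in the row of the leftmost unpaired $n$, the cell $x$ lies in that row, say row $r_0$, while $y$ lies in a row $r'$ directly above $x$ (the left case) or directly below $x$ (the right case). Moreover $x$ is not the active cell itself (that cell is unpaired, whereas $x$ is $0$-paired with $y$), so $x$ is one of the swapped cells; writing $c_0$ for the column of the active cell, the consecutive-run description of the swap guarantees that every cell of row $r_0$ in the columns between $c$ and $c_0$ inclusive carries the entry $n$.

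For the left case ($y$ above $x$, so $r'>r_0$ and $c<c_0$), I assume for contradiction that the row of $x$ is not strictly longer than the row of $y$, i.e. $a_{r_0}\le a_{r'}$; then row $r'$ reaches at least column $c_0$. Starting from the $1$ at $(r',c)$, I walk rightward one column at a time: at column $j$ the two cells of row $r'$ in columns $j,j+1$ together with the entry-$n$ cell of row $r_0$ in column $j+1$ form a Type~II triple, whose weak length requirement holds by the standing inequality. Since $T$ has no co-inversion triple and is non-attacking, the row-$r'$ entry in column $j+1$ can be neither strictly between $1$ and $n$ (that orientation is a co-inversion, cf.\ Fig.~\ref{fig:inv}) nor equal to $n$ (that would repeat $n$ in column $j+1$), so it must again be $1$. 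Propagating to column $c_0$ places a $1$ at $(r',c_0)$, directly above the active $n$; but then the active cell is $0$-paired and hence not the leftmost unpaired $n$, a contradiction.

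The right case ($y$ below $x$, so $r'<r_0$ and $c>c_0$) is entirely parallel but is run inside $f_0(T)$. Assuming for contradiction that $a_{r_0}<a_{r'}$, I pass to $f_0(T)$, where $x$ now reads $1$, $y$ reads $n$, and every cell of row $r_0$ in columns $c_0$ through $c$ reads $1$. Walking leftward from the $n$ at $(r',c)$, the two row-$r'$ cells in columns $j-1,j$ together with the entry-$1$ cell of row $r_0$ in column $j-1$ form a Type~I triple, whose strict length requirement is exactly the standing strict inequality. As $f_0(T)$ is again a semistandard key tabloid, the same dichotomy forces the row-$r'$ entry in column $j-1$ to be $n$, and propagating to column $c_0$ puts an $n$ at $(r',c_0)$; since the active cell already carries $n$ at $(r_0,c_0)$ in $T$, this repeats $n$ in column $c_0$ of $T$, violating the non-attacking condition.

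The heart of the argument, and the step I expect to require the most care, is the propagation: one must check that at each column the three relevant cells genuinely form a triple of the asserted type (the length hypotheses are arranged precisely so that Type~II appears in the weakly-longer left case and Type~I in the strictly-longer right case), and that the extreme entry values $1$ and $n$ are disposed of by non-attacking or by the terminal contradiction at the active column rather than stalling the walk. This is the affine analogue of Lemmas~5.6 and~5.7 of \cite{AG}; the one genuinely new ingredient is that the terminal contradiction is supplied by the unpaired status of the affine active cell in the left case, and by a repeated $n$ in the active column of $T$ in the right case, in place of the boundary-row argument used in the finite setting.
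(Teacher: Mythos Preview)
Your left-case argument is clean and correct: working entirely in $T$, the Type~II triple propagation forces a $1$ in the column of the active cell, contradicting its unpaired status. This is what one expects the finite analogue in \cite{AG} to look like.

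The right-case argument, however, has a genuine gap. You write ``As $f_0(T)$ is again a semistandard key tabloid'' to rule out the co-inversion in the Type~I triple $(r',j-1),(r',j),(r_0,j-1)$ inside $f_0(T)$. But Lemma~\ref{lem:pairing-rows} is precisely a prerequisite for Theorem~\ref{thm:aff-well-defined} (the paper says ``Using this, we show the affine lowering operator is well-defined on $\SSKD(a)$''), so invoking $f_0(T)\in\SSKD(a)$ here is circular. And the circularity is not cosmetic: if the cell $(r',j-1)$ carries some value $A\in\{2,\dots,n-1\}$ in $T$ (hence also in $f_0(T)$, since such a cell is not touched by a swap), then the triple reads $A,1,n$ in $T$, which is \emph{never} a Type~I co-inversion, but reads $A,n,1$ in $f_0(T)$, which \emph{is} a co-inversion precisely when $1<A<n$. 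So the no-co-inversion property of $T$ alone does not force $A=n$, and your leftward walk stalls.

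The paper's own proof simply defers to Lemmas~5.6 and~5.7 of \cite{AG} as ``entirely analogous''; you should consult those to see how the finite argument is organized so as not to presuppose well-definedness of the lowering operator. One option for repairing your argument directly is to run the propagation in $T$ walking \emph{rightward} from column $c$: the Type~I triple $(r',j),(r',j+1),(r_0,j)$ with entries $1,\,?,\,n$ genuinely forces $(r',j+1)=1$, and one can push this past the end of the swap run to extract a contradiction from the $0$-pairing structure or the reason the swap terminates.
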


\begin{proof}
The proofs are entirely analogous to those in Lemma 5.6 and 5.7 in \cite{AG21}. 
\end{proof}

Using this, we show the affine lowering operator is well-defined on $\SSKD(\alpha)$.

\begin{theorem}
  For $T\in\SSKD(\alpha)$, if $f_0(T)\neq 0$, then $f_0(T)\in\SSKD(\alpha)$.
  \label{thm:aff-well-defined}
\end{theorem}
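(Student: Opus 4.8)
The plan is to show that if $T \in \SSKD(a)$ and $f_0(T) \neq 0$, then the resulting filling $T' = f_0(T)$ is again a semistandard key tabloid, i.e.\ it is non-attacking and has no co-inversion triples. Since $f_0$ only alters entries that are $n$ or $1$, and (as observed just before the statement) $0$-paired $n$--$1$ cells stay in the same column, the weight is preserved and $T'$ still has shape $a$; the content of the check is the two structural conditions. I would organize the argument exactly parallel to the finite case in \cite{AG}, since $f_0$ is the affine analogue of $f_i$ with the roles of $n$ and $1$ playing those of $i$ and $i+1$, except that the $0$-pairing rule scans to the \emph{right} rather than the left and the boundary/vanishing condition is different.

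First I would verify the non-attacking condition. The only cells whose entries change are the chosen leftmost unpaired $n$ (which becomes $1$) and the cells in the "consecutive columns" to its left and right that get their $n$ and $1$ swapped. For each potential violation I would argue locally: a new $1$ can only attack another $1$ (or a new $n$ attack another $n$) if they lie in the same column or in adjacent columns with the higher cell on the left. Here Lemma~\ref{lem:pairing-rows} is the essential input, because the swaps in the left-consecutive and right-consecutive columns are precisely calibrated by whether the paired partner lies above (strictly longer row) or below (weakly longer row), and these row-length inequalities are exactly what rule out an attacking configuration across adjacent columns. I would treat the left-swept columns and right-swept columns separately, checking that stopping the swap at the first column lacking the required "$n$ in the same row and $1$ above/below" pattern does not introduce an attack at the boundary.

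Next I would verify that no co-inversion triple is created. Again only triples involving at least one changed cell matter, and a triple's orientation depends only on the relative order of its three entries (with the convention that basement cells carry their row index and right-cells break ties). The delicate point is that changing an $n$ to a $1$ can flip which of the three cells is "smallest," so I would go through the Type I and Type II triple configurations from Fig.~\ref{fig:inv} and confirm that the coordinated swaps in the neighbouring columns restore the non-co-inversion orientation. The row-length data from Lemma~\ref{lem:pairing-rows} feeds directly into the Type I / Type II longer-row hypotheses, ensuring the relevant triples even qualify as triples and that their orientation lands on the inversion (non-co-inversion) side.

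The main obstacle I expect is the boundary bookkeeping where the left or right swapping cascade terminates, and the interaction with the affine vanishing condition (the leftmost unpaired $n$ being in the leftmost column when $a$ has more than one nonzero part). Because $0$-pairing scans rightward while the column-reading word convention and the basement entries are fixed by row index, the wrap-around nature of the affine $e_0/f_0$ action means the usual finite-case symmetry does not apply verbatim; I would need to confirm that the "mirror" of the \cite{AG} argument genuinely goes through rather than merely asserting it. Once the left- and right-column cascades are shown to preserve both conditions and the boundary cases are dispatched using Lemma~\ref{lem:pairing-rows}, the conclusion $f_0(T) \in \SSKD(a)$ follows.
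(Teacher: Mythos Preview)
Your plan is correct and mirrors the paper's proof: both split into the non-attacking and co-inversion checks, invoke Lemma~\ref{lem:pairing-rows} as the key row-length input, and proceed by a Type~I/Type~II case analysis on the affected cells. The one refinement in the paper you might adopt is the preliminary observation that if $f_0$ alters zero, one, or all three cells of a triple then the cyclic orientation is automatically preserved (replacing the maximum $n$ by the minimum $1$, or vice versa, is a cyclic shift of the reading order), so only the exactly-two-changed case requires the detailed casework you describe.
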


\begin{proof}
  For $T\in\SSKD(\alpha)$, if $f_0(T)\neq 0$, then $f_0(T)$ is a filling of shape $\alpha$. To show $f_0(T)\in\SSKD(\alpha)$, we must show no cells of $f_0(T)$ are attacking and $f_0(T)$ has no co-inversion triples. We take each in turn.

  From its definition, $f_0$ will never create attacking cells within the same column. Suppose we have two cells, $y$ above and left of $x$, in attacking position with entries $1$ or entries $n$ and $f_0$ acts nontrivially on at least one the cells, since otherwise they remain non-attacking. If, in $T$, $y$ has entry $n$ and $x$ has entry $1$, then, since these cells lie in the same column, $f_0$ acts nontrivially only if it changes both entries, in which case they still have different entries in $f_0(T)$. Otherwise, in $T$, $y$ has entry $1$ and $x$ has entry $n$. If the row of $y$ is weakly longer, then the cell above $x$ and immediately right of $y$ must have entry $1$ and thus, $x$ will be $0$-paired with a $1$ above it. By Lemma~\ref{lem:pairing-rows}, this implies $f_0$ does not change the entry in $x$. However, by the way $f_0$ is defined, this in turn implies $f_0$ also does not change the entry of $y$, and so no attacking pair is created. If instead, the row of $y$ is strictly shorter than the row of $x$, then the cell immediately to the left of $x$ must also have entry $n$ and thus, under $f_0$ the values of these three cells get exchanged, avoiding the creation of attacking cells. Thus $f_0(T)$ is non-attacking.
  
  Now consider three cells forming a triple, labeled as in Fig.~\ref{fig:inv}. If $f_0$ changes the entries in none, one, or all of the three cells, then the orientations of the triple will be unaltered, and so no co-inversion triple is created. Thus we may assume exactly two cells of a triple are altered by the affine lowering operator. If the two affected cells have the same entry, either $1$ or $n$, then these cells lie in the same row. If both cells have entry $n$ (resp. $1$) then $f_0$ will act either on the left (resp. right) cell or on both. In either case, the correct orientations are always preserved. Thus we may assume the affected cells have different entries.

  Suppose first the triple is of Type I. If $(k,j)=(n,1)$, then by Lemma~\ref{lem:pairing-rows} $f_0$ cannot simultaneously act on both cells. However, if $f_0$ acts only on $j$, then $j$ is necessarily $0$-paired with an $n$ attacking $k$, a contradiction, and if $f_0$ acts only on $k$, then $j$ must be $0$-paired with an $n$ which is either in the same row as $k$, contradicting the fact that $k$ was in a row strictly shorter than $j$, or in a row strictly shorter than $j$, contradicting the fact that $f_0$ acted nontrivially on $k$. If $(i,j)=(1,n)$, then by Lemma~\ref{lem:pairing-rows} $f_0$ acts only on $j$ by sending it to $1$, which preserves the correct orientation. If $(k,i)=(1,n)$, then again by Lemma~\ref{lem:pairing-rows} $f_0$  acts nontrivially on the triple if and only if $j=n$, in which case all values are exchanged and no co-inversion triples are created. Lastly, if $(k,j)=(1,n)$ (resp. $(k,i)=(n,1)$) then necessarily $i=n$ (resp. $j=1$) in which case $f_0$ acts by swapping all entries, hence maintaining the correct orientations. 

  Suppose next the triple is of Type II. If $(i,j)=(1,n)$ then by Lemma~\ref{lem:pairing-rows} $f_0$ cannot act on both cells. If it acts only on $j$, then the orientation is maintained. If it acts only on $i$, then by Lemma~\ref{lem:pairing-rows}, $i$ is $0$-paired with an $n$ above it, which is impossible since this would attack $j$. If $(j,k) = (1,n)$, $(j,k) = (n,1)$, or $(i,k) = (1,n))$, then $i=1$, $i=n$, or $j=1$, respectively, in which case $f_0$ acts on all three cells by swapping their values. Hence, in all cases $f_0$ preserves the correct orientation and thus never creates co-inversion triples of any kind. Thus $f_0(T)$ has no co-inversion triples.
\end{proof}

In an entirely analogous manner, we define affine raising operators to be inverse to the affine lowering operators whenever both act nontrivially.

\begin{definition} \label{def:affine-raise-tabloid}
  The \newword{affine raising operator} $e_0$ acts on $T\in\SSKD(\alpha)$ by
  \begin{itemize}
  \item if all entries $1$ of $T$ are $0$-paired or if the rightmost unpaired $1$ is in a row with index less than $n$ and all columns to its left have a $1$ in the same row with an $n$ above, then $e_0(T)=0$;
  \item otherwise, $e_0$ changes the rightmost unpaired $1$ to $n$ and
    \begin{itemize}
    \item swaps the entries $1$ and $n$ in each of the consecutive columns left of this entry that have a $1$ in the same row and an $n$ above, and
    \item swaps the entries $1$ and $n$ in each of the consecutive columns right of this entry that have a $1$ in the same row and an $n$ below.
    \end{itemize}
  \end{itemize}
\end{definition}

\begin{proposition}
  For $S,T\in\SSKD(\alpha)$, $f_0(S) = T$ if and only if $e_0(T) = S$.
\end{proposition}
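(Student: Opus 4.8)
The plan is to show that $e_0$ and $f_0$ are mutually inverse by verifying three things: that (when both act nontrivially) they act on the same cell, that their column swaps exactly undo one another, and — the delicate point — that their respective vanishing conditions correspond. Throughout I would lean on the remark preceding Lemma~\ref{lem:pairing-rows}: since $f_0$ only swaps entries $n$ and $1$ that already share a column, it neither creates nor destroys a $0$-pairing, and the symmetric statement holds for $e_0$. Thus applying either operator fixes the $0$-pairing of all cells except the one cell whose entry is flipped. This mirrors the finite argument of Theorem~\ref{thm:well-defined} in \cite{AG}, with the $0$-pairing playing the role of the $i$-pairing.

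First I would establish the cell-matching. The iterative rule in Definition~\ref{def:0-pair} pairs each unpaired $n$ with an unpaired $1$ to its \emph{right}, so reading the entries left to right and regarding $n$ as an opening and $1$ as a closing bracket, the unpaired entries form a bracket-matching pattern in which every unpaired $1$ lies to the left of every unpaired $n$. Now suppose $f_0(S) = T \neq 0$ and let $c$ be the cell on which $f_0$ acts, namely the leftmost unpaired $n$ of $S$. Because the $0$-pairing is preserved, $c$ becomes an unpaired $1$ in $T$; as it lies to the right of all the (unchanged) unpaired $1$'s of $S$ and no new unpaired $1$ is created, $c$ is precisely the rightmost unpaired $1$ of $T$, which is the cell on which $e_0$ acts. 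It then remains to check that $e_0$ reverses the swaps: the left-hand swaps of $f_0$ convert a column pattern ``$n$ in row of $c$, $1$ immediately above'' into ``$1$ in row of $c$, $n$ above,'' which is exactly the pattern $e_0$ swaps back on the left, and symmetrically on the right with ``below.'' Since these are consecutive runs adjacent to $c$, $e_0$ restores $S$.

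The main obstacle is matching the boundary cases, i.e.\ proving $f_0(S)\neq 0 \iff e_0(f_0(S)) \neq 0$, because the two vanishing criteria look quite different: $f_0$ dies when its target lies in the leftmost column with $a$ having more than one nonzero part, whereas $e_0$ dies when its target lies in a row of index less than $n$ with every column to its left saturated by a ``$1$ in the row, $n$ above'' pattern. The content-preservation of the $0$-pairing rules out the first criterion trivially (the flipped cell is unpaired in both $S$ and $T$, so neither ``all $n$ paired'' nor ``all $1$ paired'' can hold). For the geometric criteria I would argue that a purported bad configuration — the leftmost unpaired $n$ sitting in the leftmost column of a genuinely multi-row shape — is forbidden by the semistandard structure: the non-attacking condition forces the cell directly below (in column $1$) to avoid entry $1$, Lemma~\ref{lem:pairing-rows} controls the relative row lengths of $0$-paired cells, and the resulting $L$-shaped triple with a basement cell is then a Type~I triple oriented counterclockwise, hence a co-inversion triple, contradicting $T\in\SSKD(a)$. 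I would show by this analysis that the edge condition defining vanishing of $f_0$ on $S$ holds if and only if the edge condition defining vanishing of $e_0$ on $T$ holds, so that one operator acts nontrivially exactly when the other does.

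Finally, running the identical argument with the roles of $n$ and $1$, ``leftmost'' and ``rightmost,'' and ``above'' and ``below'' interchanged gives the reverse implication: if $e_0(T) = S \neq 0$ then the rightmost unpaired $1$ of $T$ becomes the leftmost unpaired $n$ of $S$, the swaps of $f_0$ undo those of $e_0$, and the boundary analysis shows $f_0(S)\neq 0$ with $f_0(S) = T$. Combining the two directions yields the equivalence $f_0(S) = T \iff e_0(T) = S$ for all $S,T\in\SSKD(a)$, completing the proof.
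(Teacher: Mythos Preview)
Your core argument---that the $0$-pairing is preserved, so the leftmost unpaired $n$ in $S$ and the rightmost unpaired $1$ in $T=f_0(S)$ coincide, and that the consecutive column swaps of $e_0$ exactly invert those of $f_0$---is the same approach the paper takes, and this part is correct. The paper's own proof is terse: it establishes the cell-matching via pairing preservation and then simply asserts $e_0(T)\neq 0$ without explicitly verifying the second vanishing clause in Definition~\ref{def:affine-raise-tabloid}; the converse is declared ``completely analogous.''

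Your third paragraph attempts that verification, but the argument there is confused. You describe the $f_0$-vanishing configuration (``the leftmost unpaired $n$ sitting in the leftmost column of a genuinely multi-row shape'') and claim it is ``forbidden by the semistandard structure.'' It is not: such tabloids occur in $\SSKD(a)$---that is exactly the configuration on which Definition~\ref{def:affine-lowering-tabloid} declares $f_0=0$. For instance, with $n=2$ and $a=(1,1)$, the tabloid with $1$ in row~$1$ and $2$ in row~$2$ lies in $\SSKD(a)$ and exhibits precisely this configuration. More to the point, in the forward direction you have already assumed $f_0(S)\neq 0$, so the $f_0$-vanishing configuration is irrelevant there; what must be ruled out is the \emph{$e_0$-vanishing} configuration on $T$ (the rightmost unpaired $1$ in a row of index $<n$ with every column to its left carrying a $1$ in that row and an $n$ above), which is a different geometric condition that your sketch never analyzes. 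The asserted biconditional ``the edge condition for $f_0$ on $S$ holds iff the edge condition for $e_0$ on $T$ holds'' is also not well-posed, since when $f_0(S)=0$ there is no $T$ to speak of. So while your outline matches the paper's, the step you flag as ``the main obstacle'' is not correctly handled---though, to be fair, the paper itself glosses over it.
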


\begin{proof}
Suppose $S,T \in \SSKD(\alpha)$ and $f_0(S)=T$. Let $c$ denote the column index of the leftmost unpaired cell with value $n$ of $S$.  Since $f_0$ acts on the leftmost unpaired $n$, then all columns left of $c$ have no unpaired $n$'s and all columns right of $c$ have no unpaired $1$'s.  Hence, the action of $f_0$ on $S$ will not create new $0$-pair in $T$ since the new $1$ that is created by $f_0$ in column $c$ has no $n$ to pair with to its left. Thus, the rightmost unpaired $1$ of $S$ is precisely the same $1$ located in column $c$. Since $e_0$ acts on consecutive columns adjacent to this $1$ with $n$'s and $1$'s distributed in an inverted way to the columns on which $f_0$ acts, then $e_0(S) \neq 0$ and evidently $e_0(S)=e_0(f_0(T))=T$. The case $e_0(T)=S$ is completely analogous. 
\end{proof}

In \cite{AG21}, the authors show each connected component of the \emph{finite} crystal on $\SSKD(\alpha)$ is isomorphic to a \emph{finite} Demazure crystal $\B_w(\lambda)$ for some $\lambda\in P^{+}$ and some $w\in W$. Using the affine raising and lowering operators on semistandard key tabloids, we may define the \newword{affine tabloid crystal} for a weak composition $\alpha$ to be the set $\SSKD(\alpha)$, the weight map $\wt$, and the finite and affine raising and lowering operators. Our main result is to show this crystal is isomorphic to the affine Demazure crystal $\aB_w(\lambda)$, where $w \cdot \lambda = \alpha$.

%
\section{Crystal filtration}
%
\label{sec:embed}

To establish the isomorphism between the affine tabloid crystal and the corresponding affine Demazure crystal, we construct injective maps $\SSKD(\alpha) \hookrightarrow \SSKD(\beta)$ whenever $\alpha \preceq \beta$ in Bruhat order, meaning if we write $\alpha = u \cdot \lambda$ and $\beta = v \cdot \lambda$ with $\lambda$ minimal and $u,v$ minimal length, then $u \leq v$ in Bruhat order. We begin with the finite case.

\begin{definition}\label{def:raising-crystal-finite}
  Given a weak composition $\alpha$ and an index $1 \leq i<n$ such that $\alpha_i>\alpha_{i+1}$, the \newword{embedding map} $\D_i$ sends $T \in \SSKD(\alpha)$ to the filling $\D_i(T)$ of shape $s_i \cdot \alpha$ constructed as follows. Let $y_k, x_k$ denote the entries in rows $i+1,i$ and column $k$, respectively, for $k=0,\ldots,\alpha_{i+1}$, with $k=0$ corresponding to the basement. For $k\geq 0$, place entries $y_{k+1},x_{k+1}$ into rows $i,i+1$, column $k+1$ of $\D_i(T)$ by
  \begin{itemize}
  \item if $y_k$ is above $x_k$ in $\D_i(T)$, then place $y_{k+1}$ above $x_{k+1}$ unless doing so creates a Type II co-inversion triple for $x_{k+1},y_k,y_{k+1}$, in which case place $x_{k+1}$ above $y_{k+1}$;
  \item else $x_k$ is above $y_k$, so place $x_{k+1}$ above $y_{k+1}$ unless doing so creates a Type II co-inversion triple for $y_{k+1},x_k,x_{k+1}$, in which case place $y_{k+1}$ above $x_{k+1}$.
  \end{itemize}
  For all remaining cells, set entries of $\D_i(T)$ to match those of $T$.
\end{definition}

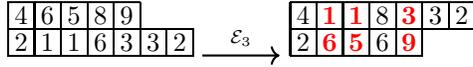
\begin{figure}[ht]
  \begin{center}
    \begin{tikzpicture}[xscale=3.75,yscale=1]
      \node at (0,0) (A) {$\vline\tableau{4 & 6 & 5 & 8 & 9 \\ 2 & 1 & 1 & 6 & 3 & 3 & 2 \\ & \\ & }$};
      \node at (1,0) (B) {$\vline\tableau{4 & \mathbf{\color{red}1} & \mathbf{\color{red}1} & 8 & \mathbf{\color{red}3} & 3 & 2 \\ 2 & \mathbf{\color{red}6} & \mathbf{\color{red}5} & 6 & \mathbf{\color{red}9} \\ & \\ & }$};
      \draw[thick,->] (A) -- (B) node[midway,above] {$\scriptstyle \D_3$} ;
    \end{tikzpicture}
  \end{center}       
  \caption{\label{fig:fswap}An example of the embedding $\D_3:\SSKD(0,0,7,5)\hookrightarrow\SSKD(0,0,5,7)$.}
\end{figure}

The embedding map is constructed precisely so the following local result holds.

\begin{lemma}\label{lem:D-local}
  Let $\alpha$ be a weak composition with $\alpha_j=0$ for $j \neq i,i+1$, and suppose $\alpha_i>\alpha_{i+1}$. For any $T \in \SSKD(\alpha)$, we have $\D_i(T) \in \SSKD(s_i \cdot \alpha)$. 
\end{lemma}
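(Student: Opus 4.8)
The plan is to verify the two defining conditions of a semistandard key tabloid for $\D_i(T)$: that it is non-attacking and that it has no co-inversion triples. Since $a_j = 0$ for $j \neq i, i+1$, both $T$ and $\D_i(T)$ live in the two rows $i$ and $i+1$, so every triple and every attacking pair involves only these rows, and I can work entirely locally.

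First I would dispatch the Type I co-inversion triples. In the target shape $s_i \cdot a$ the lower row $i$ has length $a_{i+1}$ while the upper row $i+1$ has length $a_i > a_{i+1}$, so the lower row is strictly shorter. Since a Type I triple requires its lower row to be strictly longer, $\D_i(T)$ has no Type I triples at all, including those that would use the basement. For Type II triples, the two row-adjacent cells lie in row $i+1$ at consecutive columns $k, k+1$ (or the basement and column $1$), and the third cell lies in row $i$ at column $k+1$; this third cell exists only when $k+1 \leq a_{i+1}$. Hence every Type II triple, and by the same token every potentially attacking pair, is confined to the consecutive columns $0, 1, \ldots, a_{i+1}$ --- exactly the range the construction processes --- while the tail columns $a_{i+1}+1, \ldots, a_i$, where entries simply move from row $i$ to row $i+1$ keeping their column, create neither triples nor attacking pairs.

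The substance is then an induction on the column index $k$, showing that after column $k+1$ is placed the filling on columns $0, \ldots, k+1$ is non-attacking and co-inversion-free. The construction's dichotomy is designed precisely for this: starting from the basement configuration ($y_0 = i+1$ above $x_0 = i$), the default placement in column $k+1$ keeps whichever of $x, y$ was on top in column $k$ on top again, and the rule swaps exactly when the default would create a Type II co-inversion triple on the relevant cells. Because $T$ is non-attacking, the two entries $x_{k+1}, y_{k+1}$ of a single column are distinct; when all three relevant entries are distinct, the default and the swap realize the two opposite orientations of the same L-shaped triple (swapping the two right-hand cells is a transposition that reverses the cyclic orientation), so exactly one is a co-inversion and the forced swap is automatically safe. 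I would record this as the clean generic case.

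The main obstacle is the tie cases, where the top-left entry inherited from column $k$ equals one of the two entries of column $k+1$. Here the orientation is governed by the convention that among equal entries the one further right counts as smaller, and one must simultaneously ensure that the chosen placement never puts equal entries in attacking position, namely with the top of column $k$ equal to the bottom of column $k+1$ (the left cell being strictly higher). I would enumerate the possible equalities among the three cells and check in each that the construction's choice avoids both a clockwise Type II orientation and an attacking pair, and that the two placement options never both fail, so the construction is always able to make a legal choice and never gets stuck. Feeding these local facts through the induction shows $\D_i(T)$ is non-attacking and free of co-inversion triples, hence $\D_i(T) \in \SSKD(s_i \cdot a)$.
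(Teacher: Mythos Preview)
Your proposal is correct and follows the same skeleton as the paper: only Type~II triples can occur in $\D_i(T)$, the construction is designed to avoid them column by column, and non-attacking then follows. The paper compresses your induction and tie-case enumeration into two observations. First, since $T$ is non-attacking the two entries $x_{k+1},y_{k+1}$ in column $k{+}1$ are distinct, and swapping them always reverses the cyclic orientation of the Type~II triple with the fixed upper entry in column $k$ --- the tie-breaking convention depends only on the column, which the swap does not change --- so exactly one of the two placements is co-inversion-free and no separate tie analysis is needed. Second, because $\D_i$ preserves column sets, any attacking pair in $\D_i(T)$ must sit in consecutive columns, and such a pair (upper-left equal to lower-right) is automatically a clockwise Type~II triple; hence ``no co-inversion triples'' already implies ``non-attacking,'' and you need not verify the two conditions separately.
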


\begin{proof}
  Observe all triples are of Type I for $T$ and of Type II for $\D_i(T)$. Since $T$ is non-attacking, entries in column $k+1$ are distinct, and so changing their order reverses the orientation of the Type II triple for those cells and the upper cell in column $k$. Thus by construction $\D_i(T)$ has no co-inversion triples. Since $\D_i$ preserves the column sets, the only potential attacking cells are in consecutive columns, but two such entries, say in columns $k,k+1$, necessarily create a Type II co-inversion triple. Thus $\D_i(T)$ is non-attacking as well, and so $\D_i(T) \in \SSKD(s_i \cdot \alpha)$.  
\end{proof}

Before establishing more properties of $\D_i$, we present the affine case.

\begin{definition}\label{def:raising-crystal-affine}
  Given $\alpha$ such that $\alpha_n\geq \alpha_{1}>0$, the \newword{affine embedding map} $\D_0$ sends $T \in \SSKD(\alpha)$ to the filling $\D_0(T)$ of shape $s_0 \cdot \alpha$ constructed as follows. Let $y_k, x_k$ denote the entries in rows $n,1$ and column $k$, respectively, for $k=0,\ldots,\alpha_{1}$, with $k=0$ corresponding to the basement. For $k\geq 1$ place entries $y_{k},x_{k+1}$ into the cells in row $n$, column $k$ and row $1$, column $k+1$ of $\D_0(T)$ by
  \begin{itemize}
  \item  if $x_{k}$ is in row $1$, column $k$ of $\D_0(T)$, then place $y_{k}$ above it in row $n$ unless doing so creates a Type I co-inversion triple or attacking pair for $x_k, x_{k+1}, y_{k}$, in which case place $x_{k+1}$ in row $n$ above $x_{k}$;
  
  \item else $y_{k-1}$ is in row $1$, column $k$, so place $x_{k+1}$ above it in row $n$ unless doing so creates a Type I co-inversion triple or attacking pair for $y_{k-1},y_{k},x_{k+1}$, in which case place $y_{k}$ in row $n$ above $y_{k-1}$.
  
  \end{itemize}
  For all remaining cells, set entries of $\D_0(T)$ to match those of $T$, then remove the entries in row $n$, columns $\alpha_1,\ldots,\alpha_{n}$ and append them to the end of row $1$.
\end{definition}

\begin{figure}[ht]
  \begin{center}
    \begin{tikzpicture}[xscale=3.5,yscale=1]
      \node at (0,0) (A) {$\vline\tableau{3 & 3 & 8 & 6 & 3 & 3 \\ & \\ & \\ 1 & 4 & 5 & 5 & 7 & 2}$};
      \node at (1,0) (B) {$\vline\tableau{3 & \mathbf{\color{red}5} & \mathbf{\color{red}5} & 6 & \mathbf{\color{red}2} \\ & \\ & \\ 1 & 4 & \mathbf{\color{red}3} & \mathbf{\color{red}8} & 7 & \mathbf{\color{red}3} & 3}$};
      \draw[thick,->] (A) -- (B) node[midway,above] {$\scriptstyle \D_0$} ;
      \draw[thin] (0.71,0.35) -- (0.82,-0.35) ;
      \draw[thin,color=red] (0.81,0.35) -- (0.92,-0.35) ;
      \draw[thin,color=red] (0.91,0.35) -- (1.02,-0.35) ;
      \draw[thin] (1.01,0.35) -- (1.12,-0.35) ;
      \draw[thin,color=red] (1.11,0.35) -- (1.22,-0.35) ;
    \end{tikzpicture}
  \end{center}       
  \caption{\label{fig:aswap}An example of the embedding $\D_0:\SSKD(6,0,0,6)\hookrightarrow\SSKD(7,0,0,5)$.}
\end{figure}
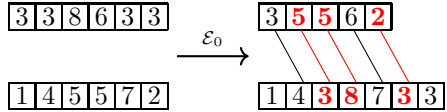

As in the finite case, we have the following affine analog of Lemma~\ref{lem:D-local}.

\begin{lemma}\label{lem:D-local-affine}
  Let $\alpha$ be a weak composition with $\alpha_j=0$ for $j \neq 1,n$, and suppose $\alpha_n \geq \alpha_{1}>0$. For any $T \in \SSKD(\alpha)$, we have $\D_0(T) \in \SSKD(s_0 \cdot \alpha)$. 
\end{lemma}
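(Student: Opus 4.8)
The plan is to follow the template of the finite local statement, Lemma~\ref{lem:D-local}, adapting it to the two features that make $\D_0$ genuinely different from $\D_i$: the entries are placed diagonally, shifted by one column, rather than staying within a column, and a block of cells from row $n$ is appended to row $1$. First I record the shape data. Since $a_j=0$ for $j\neq 1,n$ we have $s_0\cdot a=(a_n+1,0,\ldots,0,a_1-1)$, so $\D_0(T)$ again has exactly two nonempty rows; now the bottom row $1$ has length $a_n+1$ and the top row $n$ has length $a_1-1$, with $a_n+1>a_1-1$. As a consequence every triple of $\D_0(T)$ has its row-adjacent pair in the longer bottom row with the third cell directly above the left member, so all triples are of Type~I, and no Type~II triple can occur because the short top row $n$ is never weakly longer than row $1$. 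This mirrors exactly the fact that, with $a_n\geq a_1$, all triples of $T$ are of Type~II.

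For the co-inversion condition I would observe that each Type~I triple of $\D_0(T)$ consists of the cells in row $1$, columns $c,c+1$ and row $n$, column $c$ for some $c\leq a_1-1$, which is precisely the triple that Definition~\ref{def:raising-crystal-affine} inspects at step $k=c$. Since both placement rules are defined to avoid a Type~I co-inversion among exactly these cells, no co-inversion triple can survive. The appended cells lie in columns $a_1+1,\ldots,a_n+1$ of row $1$, all strictly beyond the length $a_1-1$ of row $n$, so no cell of row $n$ sits above or above-left of them and they belong to no triple; the finitely many triples through a first-column basement cell are checked directly from the length inequality and the basement values $1$ and $n$ as in Fig.~\ref{fig:inv}.

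Non-attacking is where the argument departs from the finite case: because $\D_0$ moves entries diagonally it does not preserve column sets, so the slick reduction ``attacking $\Rightarrow$ Type~II co-inversion'' used in Lemma~\ref{lem:D-local} is unavailable. Instead I would note that the only candidate attacking pairs are a cell of row $n$ with the cell directly below it (same column) or with the cell one step down and to the right (adjacent columns, left higher). The non-attacking hypothesis on $T$ already forces $y_k\neq x_k$ and $y_k\neq x_{k+1}$, so the default placement never attacks in the first branch, while in the second branch the genuinely new possibility is $x_{k+1}=y_{k-1}$, a pair that is two columns apart in $T$ and hence unconstrained there. This is exactly the configuration the ``attacking pair'' clause of Definition~\ref{def:raising-crystal-affine} is designed to detect, and resolving it is folded into the main case analysis below.

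The crux, which I expect to be the main obstacle, is to show that the deterministic rule never stalls: whenever the default placement at a step would create a Type~I co-inversion or an attacking pair, the alternative placement creates neither. Here I would use that the two entries being placed are distinct (again because $T$ is non-attacking across the relevant adjacent columns), so swapping them reverses the orientation of the controlled Type~I triple, exactly as swapping a column reversed the Type~II orientation in the finite proof. The remaining work is to rule out, using the validity of $T$ together with row-length comparisons in the spirit of Lemma~\ref{lem:pairing-rows}, the coincidences of equal entries and the second-branch same-column attack that could make both choices fail simultaneously. This is a short but careful case analysis on the relative order of $x_k,x_{k+1},y_{k-1},y_k$, entirely parallel to the Type~I/Type~II casework of Theorem~\ref{thm:aff-well-defined}, and the propagation of these local choices along the columns is the delicate bookkeeping at the heart of the proof.
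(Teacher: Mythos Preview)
Your approach is correct in outline and would work if the promised case analysis were carried out, but you miss the structural observation that lets the paper dispatch the lemma in four sentences. You explicitly write that ``because $\D_0$ moves entries diagonally it does not preserve column sets, so the slick reduction \ldots\ is unavailable.'' The paper's point is precisely that the analog \emph{is} available: $\D_0$ preserves the \emph{diagonal sets}, i.e.\ the multiset $\{y_k,x_{k+1}\}$ occupying the northwest--southeast diagonal through $(n,k)$ and $(1,k+1)$ is the same in $T$ and in $\D_0(T)$. Since $T$ is non-attacking these two entries are distinct, so swapping them genuinely reverses the orientation of the Type~I triple they form with their southwest neighbor, and the construction can always choose the non--co-inversion orientation. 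That single observation replaces your entire ``rule never stalls'' case analysis.

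The same observation handles non-attacking. Because diagonal sets are preserved, an adjacent-column attack in $\D_0(T)$ (cells $(n,k)$ and $(1,k+1)$) would require the two diagonal entries to coincide, which is impossible. The only remaining possibility is a same-column attack, and the paper disposes of this by noting that two equal entries in $(n,k)$ and $(1,k)$ force a Type~I co-inversion triple with $(1,k+1)$, already excluded. So the finite-case template of Lemma~\ref{lem:D-local} transfers verbatim once ``column'' is replaced by ``diagonal'' and Type~II by Type~I; your more explicit bookkeeping of $x_{k+1}=y_{k-1}$ and the second-branch analysis is correct but unnecessary.
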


\begin{proof}
  Observe all triples are of Type II for $T$ and of Type I for $\D_0(T)$. Since $T$ is non-attacking, entries along a northwest to southeast diagonal are distinct, and so changing their order reverses the orientation of the Type I triple for those cells and their southwest neighbor. Thus by construction $\D_0(T)$ has no co-inversion triples. Since $\D_0$ preserves the diagonal sets, the only potential attacking cells are in the same column, but two such entries necessarily create a Type I co-inversion triple. Thus $\D_0(T)$ is non-attacking as well, and so $\D_0(T) \in \SSKD(s_0 \cdot \alpha)$.  
\end{proof}

The embedding maps are defined so that locally there are no co-inversion triples. This property holds globally as well, showing we have a map of tabloids.

\begin{theorem}\label{thm:embedding-well-defined}
   If $\alpha \preceq s_i \cdot \alpha$, then $\D_i(T)\in \SSKD(s_i\cdot \alpha)$ for any $T\in\SSKD(\alpha)$.
\end{theorem}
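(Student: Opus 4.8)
The plan is to reduce the global statement to the local Lemmas~\ref{lem:D-local} and~\ref{lem:D-local-affine} by exploiting the fact that the embedding maps $\D_i$ (and $\D_0$) act only on two rows—rows $i,i+1$ (or rows $n,1$)—and leave every other cell fixed. The condition $a \preceq s_i \cdot a$ means exactly that $a_i > a_{i+1}$ (for $1\le i<n$) or $a_n \ge a_1 > 0$ (for $i=0$), so the relevant embedding map is indeed defined on $\SSKD(a)$. The key observation is that a co-inversion triple or an attacking pair in $\D_i(T)$ can only be problematic if it involves at least one cell whose entry was moved; but the only cells whose entries (or positions) change are those in rows $i,i+1$. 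First I would show that any potential violation in $\D_i(T)$ must be \emph{local}, i.e.\ witnessed entirely by the interaction of the altered cells with their immediate neighbors in rows $i,i+1$, and hence is already controlled by the local analysis.

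More concretely, I would argue as follows. Suppose toward a contradiction that $\D_i(T)$ contains a co-inversion triple or an attacking pair. Since $T\in\SSKD(a)$ has none, and $\D_i$ fixes all cells outside rows $i,i+1$, any new violation must involve at least one cell in rows $i,i+1$ whose entry differs from the corresponding entry in $T$. The embedding is built column-by-column precisely so that, when restricted to rows $i,i+1$, the resulting two-row filling has no co-inversion triples and is non-attacking: this is exactly what Lemma~\ref{lem:D-local} (resp.\ Lemma~\ref{lem:D-local-affine}) asserts for the ``restricted'' shape in which only rows $i,i+1$ carry cells. The remaining work is to verify that triples straddling rows $i,i+1$ and an \emph{external} row $j\neq i,i+1$ cannot become co-inversion triples. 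I would handle this by noting that for such a triple, the two cells in rows $i,i+1$ are row-adjacent and their relative order is governed by the construction rule, while the external third cell retains its entry from $T$; one then checks that the construction rule, which only ever swaps the order of a column-pair when forced to reverse a Type~II (resp.\ Type~I) orientation, cannot simultaneously introduce a co-inversion with a fixed external cell, because the weight multiset of each column (resp.\ diagonal) is preserved.

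The main obstacle I anticipate is precisely this verification of \emph{mixed} triples—those using one altered cell and one unaltered cell in a third row—since the clean reversal-of-orientation argument in the local lemmas only addresses triples confined to rows $i,i+1$. To dispatch these I would use the invariance that $\D_i$ preserves each column as a set (and $\D_0$ preserves each diagonal as a set), so the entries available to form a triple with an external cell are the same in $T$ and $\D_i(T)$; only their vertical arrangement within rows $i,i+1$ changes. Thus a mixed triple in $\D_i(T)$ that is a co-inversion would correspond, after undoing the swap, to a configuration in $T$, and I would show that the construction rule is engineered never to create such a configuration: whenever placing $y_{k+1}$ above $x_{k+1}$ (or vice versa) would generate a co-inversion with an external cell, the non-attacking and no-co-inversion hypotheses on $T$ force the opposite placement to have already been chosen.

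Finally, I would assemble these pieces: the local lemmas guarantee no violations among rows $i,i+1$ alone, the column/diagonal-set invariance plus the case analysis on mixed triples rules out violations involving an external row, and triples wholly outside rows $i,i+1$ are untouched and hence inherit the $\SSKD$ property from $T$. Injectivity, which is needed to call $\D_i$ an embedding, follows separately from the invertibility of the column-by-column construction (each step is determined and reversible given the column sets), though the theorem as stated only claims $\D_i(T)\in\SSKD(s_i\cdot a)$, so I would focus the proof on well-definedness and merely remark that injectivity is immediate from the reversibility of the construction.
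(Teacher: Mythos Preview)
Your high-level decomposition matches the paper's: use the local Lemmas~\ref{lem:D-local} and~\ref{lem:D-local-affine} for triples inside rows $i,i+1$, observe that triples entirely outside those rows are untouched, and then deal with \emph{mixed} triples involving one affected row and one external row. The non-attacking argument is also essentially the same. But your treatment of mixed triples has a real gap.

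You assert that ``the construction rule is engineered never to create such a configuration: whenever placing $y_{k+1}$ above $x_{k+1}$ would generate a co-inversion with an external cell, the non-attacking and no-co-inversion hypotheses on $T$ force the opposite placement to have already been chosen.'' This is not how the construction rule in Definition~\ref{def:raising-crystal-finite} (or~\ref{def:raising-crystal-affine}) works: the rule only tests the Type~II (resp.\ Type~I) triple formed with the previously placed cell \emph{in the two affected rows}; it never looks at cells in an external row $k$. So there is no a~priori reason the chosen placement avoids a co-inversion with row $k$, and your proposed mechanism does not exist. The column-set invariance you invoke also does not finish the job by itself, because when rows $i$ and $i+1$ swap lengths the \emph{type} of a triple involving row $k$ can change (a triple that was Type~I in $T$ may be Type~II in $\D_i(T)$), so ``same entries, different arrangement'' does not automatically transfer the no-co-inversion property.

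The paper closes this gap with an ingredient you are missing: it invokes \cite[Proposition~2.9]{AG} to conclude that, because column sets are preserved, any new co-inversion triple in $\D_i(T)$ must involve the \emph{basement}. This collapses the mixed-triple analysis to triples in column~$1$ only, and Type~I basement triples are ruled out because they force an attacking pair. What remains is a short but genuine case analysis on row-length relations ($a_i > a_k \ge a_{i+1}$ versus $a_i \ge a_k > a_{i+1}$ for $i>0$, and separately $j=1$ versus $j=n$ for $i=0$), comparing the basement entries and the first-column entries to derive a contradiction with $T\in\SSKD(a)$. Your proposal neither cites this reduction nor carries out the row-length case analysis, and without one of them the argument does not go through.
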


\begin{proof}
  We begin with the non-attacking condition. For $i>0$, $\D_i$ acts only on rows $i$ and $i+1$ of $T$ and, by Lemma~\ref{lem:D-local}, does not create attacking cells within these two rows. Thus $\D_i$ does not create attacking cells between rows $i,i+1$ and any row above or below them. Similarly for $i=0$, $\D_0$ acts only on rows $1$ and $n$ of $T$ and, by Lemma~\ref{lem:D-local-affine}, does not create attacking cells within these two rows. Two cells are attacking exactly when they are within $n$ letters in the column reading word of $T$ (accounting for empty rows), and so $\D_0$ does not create attacking cells between rows $1,n$ and any row in between them. Thus $\D_i(T)$ is non-attacking for $i\geq 0$.

  By the local nature of the embedding maps, $\D_i(T)$ has no co-inversion triples between two unaffected rows. By Lemmas~\ref{lem:D-local} and \ref{lem:D-local-affine}, $\D_i(T)$ has no co-inversion triples between the two affected rows. Suppose, for contradiction, $\D_i(T)$ has a co-inversion triple between one of the affected rows, say $j$, and an unaffected row, say $k$. By \cite[Proposition~2.9]{AG21}, the co-inversion triple must involve the basement. Any Type I co-inversion triple involving the basement implies an attacking pair as well, so we may assume any co-inversion triple is Type II.

  Suppose first $i>0$ and $k>i+1$, so that row $k$ is weakly longer than row $j$ in $\D_i(T)$. Denote the entries in column $1$ rows $k,i+1,i$ of $\D_i(T)$ by $z,y,x$, respectively. We must have $\alpha_i > \alpha_k \geq \alpha_{i+1}$, since otherwise rows $i,i+1$ are both weakly shorter than row $k$, and so $T$ must also have a Type II co-inversion triple. Thus $j=i$, and $x$ must have been in row $i$ of $T$ as well so as to avoid creating the same triple in $T$. Now, since $\D_i$ did not swap $x$ and $y$, we must have $x < y \leq i+1 < k$. Thus the alleged co-inversion triple $x,z,k$ in $\D_i(T)$ must have $z < x < k$, which then forces the triple $y,z,k$ to be a Type II co-inversion triple in $T$, a contradiction.

  Next suppose $i>0$ and $k<i$, so that row $k$ is weakly shorter than row $j$ in $\D_i(T)$. Denote the entries in column $1$ rows $k,i,i+1$ of $\D_i(T)$ by $x,y,z$, respectively. Similar to before, we must have $\alpha_i \geq \alpha_k > \alpha_{i+1}$, since otherwise rows $i,i+1$ are both weakly longer than row $k$, and so $T$ must also have a Type II co-inversion triple. Thus $j=i+1$, and $z$ must have been in row $i+1$ of $T$ as well so as to avoid creating the same triple in $T$. Since $\D_i$ did not swap $y$ and $z$, we must have $y < z \leq i+1$. Thus the alleged co-inversion triple $x,z,i+1$ in $\D_i(T)$ must have $z < x < i+1$, which then forces the triple $x,y,i+1$ to be a Type II co-inversion triple in $T$, a contradiction.

  Finally suppose $i=0$. If $j=1$, then the entry in column $1$, row $1$ is unchanged and row $1$ is longer in $\D_0(T)$ than in $T$, so the same cells form a Type II co-inversion triple in $T$, a contradiction. Thus $j=n$, and since row $n$ is shorter in $\D_0(T)$ than in $T$, we must the entry, say $z$ in column $1$ of row $n$ of $\D_0(T)$ was previously in column $2$, row $1$ of $T$. Let $x$ denote the entry in column $1$, row $k$. Since row $n$ of $\D_0(T)$ is weakly longer than row $k$, we must have $\alpha_1 > \alpha_k$. Thus to avoid a Type I triple for $1,z,x$ in $T$, we must have $1 < x < z$. However, to have a Type II triple for $x,z,n$ in $\D_0(T)$, we must have $z < x < n$, a contradiction. 
\end{proof}

We next show the embedding map $\D_i$ is injective and the heads of all $i$-strings are contained in the image. Thus the only elements not in the image of $\D_i$ are those obtained by following $i$-strings from elements in the image.

\begin{lemma}\label{lem:injective}
  For $\alpha \preceq s_i \cdot \alpha$, the embedding map $\D_i$ is injective. Moreover, for $T\in\SSKD(s_i \cdot \alpha)$, if $e_i(T)=0$ then there exists $S\in\SSKD(\alpha)$ such that $\D_i(S)=T$.
\end{lemma}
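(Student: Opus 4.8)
The plan is to handle the two assertions separately, exploiting a structural fact that is visible on inspection of Definitions~\ref{def:raising-crystal-finite} and~\ref{def:raising-crystal-affine}: the output $\D_i(T)$ depends on $T$ only through the rows other than $i,i+1$ (which are copied verbatim) together with the \emph{multisets} of entries in each column (for $i>0$) or each northwest--southeast diagonal (for $i=0$) of the two affected rows. Indeed, the greedy rule places the pair of values occupying column/diagonal $k{+}1$ afresh, consulting only the already-constructed portion of $\D_i(T)$ and those two values, never the order in which they sat in $T$; this is the order-forgetting reflected in the preservation of column (resp.\ diagonal) sets in Lemmas~\ref{lem:D-local} and~\ref{lem:D-local-affine}. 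Consequently, proving injectivity amounts to showing that a tabloid of shape $a$ is recoverable from its entries outside rows $i,i+1$ together with these multisets, and the preimage of a head must be produced by the analogous reverse greedy.

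For injectivity I would therefore prove that the column (resp.\ diagonal) multisets reconstruct $T\in\SSKD(a)$. Here the hypothesis $a\preceq s_i\cdot a$ is essential: it forces an overhang, the strictly longer of the two affected rows extending past the shorter, and this overhang anchors one end of the reconstruction. Proceeding inward from the overhang (and, at the other end, from the basement), the non-attacking condition together with the absence of co-inversion triples forces the vertical order in each successive column/diagonal, exactly reversing the greedy of Definition~\ref{def:raising-crystal-finite} or~\ref{def:raising-crystal-affine}; the Type~I (resp.\ Type~II) character of the triples among the affected rows, as in Lemmas~\ref{lem:D-local} and~\ref{lem:D-local-affine}, is what makes each local choice unique. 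Since every $T\in\SSKD(a)$ is then recovered from its multisets, $\D_i(T)=\D_i(T')$ forces $T=T'$.

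For the second assertion, given $T\in\SSKD(s_i\cdot a)$ with $e_i(T)=0$, I would construct a candidate preimage $S$ of shape $a$ by the reverse embedding: relocate the overhang of the longer affected row of $T$ to the shorter row and re-sort each column (resp.\ diagonal) into the Type~I (resp.\ Type~II) order, reversing the greedy of Definition~\ref{def:raising-crystal-finite} (resp.~\ref{def:raising-crystal-affine}). Two things must then be verified: that $S\in\SSKD(a)$, and that $\D_i(S)=T$. The first is handled by rerunning the non-attacking and co-inversion casework of Theorem~\ref{thm:embedding-well-defined} in reverse. Since $S$ and $T$ carry identical column/diagonal multisets by construction, and since $\D_i(S)$ depends only on those multisets, the second reduces to showing that $T$ is itself the forward-greedy arrangement of its multisets, equivalently that $T$ lies in the image of $\D_i$ at all; the content of the hypothesis $e_i(T)=0$ is exactly to guarantee this.

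The crux, and the step I expect to be hardest, is this last implication: translating the crystal head condition $e_i(T)=0$---a statement about the $i$-pairing of the entries $i$ and $i+1$ (Definition~\ref{def:pair}), or the $0$-pairing of $n$ and $1$ (Definition~\ref{def:0-pair}) when $i=0$---into the purely positional assertion that $T$ coincides with the greedy output of the embedding on its own multisets. The difficulty is that the embedding rearranges entries of arbitrary value in rows $i,i+1$, whereas the pairing sees only two distinguished values, so one must argue that any deviation of $T$ from greedy form propagates to an unpaired $i+1$ (resp.\ $n$) on which $e_i$ (resp.\ $e_0$) could act. Carrying this out uniformly across Type~I and Type~II triples, and especially in the affine case---where the invariants are northwest--southeast diagonals, the overhang straddles rows $1$ and $n$, and the wrap-around weight action of $s_0$ in~\eqref{eq:W-action} must be tracked, paralleling the dichotomy of Lemma~\ref{lem:pairing-rows}---is where the bulk of the work will lie.
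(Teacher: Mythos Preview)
Your injectivity argument is essentially the paper's: both observe that the reverse greedy recovers the column (resp.\ diagonal) multisets, and then invoke \cite[Proposition~2.9]{AG} to conclude that a semistandard key tabloid is determined by those multisets.

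For the second assertion, however, you have the two verifications inverted. The equality $\D_i(S)=T$ is in fact automatic and needs no hypothesis: you already observe that $\D_i$ depends only on the column/diagonal multisets, and Lemma~\ref{lem:D-local} (resp.~\ref{lem:D-local-affine}) shows its output avoids all Type~II (resp.\ Type~I) co-inversion triples among the two affected rows; since $T\in\SSKD(s_i\cdot a)$ also avoids them and carries the same multisets, \cite[Proposition~2.9]{AG} forces $\D_i(S)=T$. Every $T\in\SSKD(s_i\cdot a)$ is already ``the forward-greedy arrangement of its own multisets,'' so there is nothing further to show there.

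The genuine obstruction is your ``first'' verification, $S\in\SSKD(a)$, which does \emph{not} follow by simply rerunning Theorem~\ref{thm:embedding-well-defined} in reverse. The asymmetry is the basement: the entries $i,i+1$ occupy rows $i,i+1$ in both shapes, but the row lengths swap, so the basement triple changes type between $a$ and $s_i\cdot a$, and the casework of Theorem~\ref{thm:embedding-well-defined} does not transport. Concretely, for $i>0$ the reverse greedy can produce a Type~I co-inversion triple with the basement in $S$ exactly when column~$1$ of $T$ has $i+1$ in row $i+1$; the paper then uses $e_i(T)=0$ to force an $i$ below it, feeds this into the reverse greedy for column~$2$, and iterates to conclude that every column of $T$ has $i,i+1$ in rows $i,i+1$, whereupon $a\prec s_i\cdot a$ yields an unpaired $i+1$ contradicting $e_i(T)=0$. (For $i=0$ the obstruction vanishes outright, since column~$1$ of row~$1$ is fixed at~$1$.) So the crux you anticipate in your final paragraph is real, but it governs $S\in\SSKD(a)$ via the basement, not the equality $\D_i(S)=T$.
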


\begin{proof}
  The embedding map is reversed by considering the opposite type of triples and working right to left. In particular, if $\D_i(S)=\D_i(S')$ for two tabloids $S,S' \in \SSKD(\alpha)$, then by reversing the embedding map as described above, the preimage of $\D_i(S)$ and $\D_i(S')$ will have a unique column set. Hence, $S$ and $S'$ have the same column set and thus by \cite[Proposition~2.9]{AG21} must be the same tabloid. 
  
  Now suppose that $T \in \SSKD(s_i\cdot \alpha)$ satisfying $e_i(T)=0$.  For $i \neq 0$ this implies that every $i+1$ appearing in $T$ has an $i$ below it or in a column to its left. Let $S$ denote the preimage of $T$ upon reversing the embedding map $\D_i$. The only way in which a co-inversion triple of Type I with the basement in $S$ could arise, and thus prevent $S$ from being in $\SSKD(\alpha)$,  is if in $T$ the cell in row $i+1$ of the first column has value $i+1$. Since $e_i(T)=0$ this implies that the cell immediately below it has value $i$. This in turn causes the cell in column $2$ in $S$ to the right of $i+1$ to have value $i+1$. Hence, in $T$ the second column has an $i$ in row $i$ and an $i+1$ in row $i+1$. Using the fact that $e_i(T) =0$ and iterating this procedure by passing back and forth from $S$ to $T$ it can be deduced that in $T$ all the entries in row $i+i$ have value $i+1$ and in row $i$ have value $i$. Since $\alpha \preceq s_i \cdot \alpha$ this means $T$ has an unpaired $i+1$ which contradicts the fact that $e_i(T)=0$. Hence $i$ and $i+1$ do not get flipped in $S$, thus $S$ is indeed in $\SSKD(\alpha)$. 
  
  If $i=0$ then since the first entry in row $1$ must have value $1$ and $\D_0$ does not modify this cell at all, then after reversing the affine embedding map no matter what value is placed in the first entry of row $n$, this triple will always have the correct orientation with respect to the basement, thus $S$ will always be in $\SSKD(\alpha)$. 
\end{proof}

We now show the embedding maps respect the crystal structure.

\begin{theorem}\label{thm:intertwine}
  For $\alpha \preceq s_i \cdot \alpha$ and $T \in \SSKD(\alpha)$, we have $\varphi_j(T)=\varphi_j(\D_i(T))$ for any $0\leq j <n$ and if $f_j(T) \neq 0$ for some $j\geq 0$, then $\D_i(f_j(T)) = f_j(\D_i(T))$.
\end{theorem}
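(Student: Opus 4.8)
The plan is to prove both assertions by analyzing how the embedding $\D_i$ interacts with the $j$-pairing of Definitions~\ref{def:pair} and~\ref{def:0-pair}, since both the string length $\varphi_j$ and the action of $f_j$ are governed entirely by this pairing. The first and most important step is to show that $\D_i$ induces a bijection between the cells of $T$ and those of $\D_i(T)$ under which $j$-pairs correspond and the left-to-right order of unpaired $j$'s and $(j+1)$'s is preserved. For $i>0$ this is immediate: as in the proof of Lemma~\ref{lem:D-local}, $\D_i$ only permutes entries within each column of rows $i,i+1$, so it preserves the multiset of entries in every column; since columns carry distinct entries, the sequence of columns containing a given value is unchanged, and hence the $j$-pairing of Definition~\ref{def:pair} (which depends only on the horizontal order of the values $j,j+1$) is identical in $T$ and $\D_i(T)$. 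For $i=0$ I would instead use that $\D_0$ preserves diagonal sets (Lemma~\ref{lem:D-local-affine}), tracking the diagonal shift through the column reading word to see that the relative order of the relevant values — the pair $n,1$ for $j=0$, and any pair $j,j+1$ otherwise — is again preserved.

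Given this pairing correspondence, I would establish the commutation $\D_i(f_j(T)) = f_j(\D_i(T))$. The leftmost unpaired cell (with value $j$, or $n$ when $j=0$) acted on by $f_j$ in $T$ corresponds under the bijection to the leftmost unpaired such cell in $\D_i(T)$, so both operators change the value of corresponding cells; it remains to check the accompanying swaps in adjacent columns agree. When the cells manipulated by $f_j$ lie outside the two rows reshuffled by $\D_i$, the swaps occur in untouched rows and commutation is automatic from the first step. The substantive case is when these cells lie in rows $i,i+1$ (or $1,n$ for $i=0$), where $\D_i$ may reverse their vertical order; here I would argue, mirroring the triple analysis in the proof of Theorem~\ref{thm:embedding-well-defined} and using Lemma~\ref{lem:pairing-rows}, that the ``same row / above / below'' conditions triggering the swaps in Definitions~\ref{def:raise-tabloid}, \ref{def:lower-key} and~\ref{def:affine-lowering-tabloid} are exactly those preserved by the construction of $\D_i$, so the two orders of operations produce the same filling.

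Finally I would deduce $\varphi_j(T)=\varphi_j(\D_i(T))$. The commutation gives $\D_i(f_j^m(T)) = f_j^m(\D_i(T))$ whenever the left side is nonzero, so taking $m=\varphi_j(T)$ and using that $\D_i$ lands in $\SSKD(s_i\cdot a)$ yields $\varphi_j(\D_i(T)) \ge \varphi_j(T)$. For the reverse inequality I must show the vanishing condition $f_j(S)=0$ forces $f_j(\D_i(S))=0$ for every $S$ along the string; the case where all $j$'s are paired is handled by the pairing correspondence, while the boundary clause in the definitions of $f_j$ and $f_0$ (the leftmost unpaired cell sitting in a distinguished row with a saturated configuration to its left) must be transported across $\D_i$. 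Applying this implication at the terminal cell $S=f_j^{m}(T)$ with $m=\varphi_j(T)$ then shows $f_j^{m+1}(\D_i(T))=0$, giving equality; injectivity of $\D_i$ (Lemma~\ref{lem:injective}) ensures the strings do not collapse.

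The main obstacle I anticipate is the affine embedding $\D_0$: unlike $\D_i$ for $i>0$ it alters both the shape and the column positions of entries, so the clean ``column multiset is preserved'' argument fails, and the pairing correspondence for $j\neq 0$ must be extracted from the diagonal-preservation property through careful reading-word bookkeeping. Verifying that the Demazure truncation encoded in the boundary clauses of Definitions~\ref{def:lower-key} and~\ref{def:affine-lowering-tabloid} is respected by $\D_0$ — so that $\varphi_j$ is genuinely preserved rather than merely bounded below — is the most delicate point.
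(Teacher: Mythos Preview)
Your overall strategy---establish a $j$-pairing correspondence under $\D_i$ and use it to control both $\varphi_j$ and $f_j$---is the right one and is where the paper starts too. But the paper organizes the two claims in the opposite order and much more economically. First, it proves $\varphi_j(T)=\varphi_j(\D_i(T))$ \emph{directly} from the pairing correspondence: by \cite[(3.16)]{AG}, $\varphi_j(T)$ is simply the number of cells with entry $j$ (or $n$ when $j=0$) that are not $j$-paired, so once pairing is preserved the equality is immediate. Your detour through iterated commutation and transport of the Demazure boundary clause is unnecessary, and since you yourself flag that boundary analysis under $\D_0$ as the most delicate point, avoiding it is a real gain. Second, for the commutation $\D_i(f_j(T))=f_j(\D_i(T))$ the paper does not trace swapped cells in the reshuffled rows as you propose; instead it invokes \cite[Proposition~2.9]{AG}, which says a semistandard key tabloid is uniquely determined by its column sets. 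Since $\D_i$ (for $i>0$) preserves column sets and the pairing correspondence forces $f_j$ to act on the same columns in $T$ and in $\D_i(T)$, both $\D_i(f_j(T))$ and $f_j(\D_i(T))$ have identical column sets and hence coincide; a short variant handles $\D_0$. Your cell-by-cell verification would work, but it re-derives by hand what Proposition~2.9 gives for free.
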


\begin{proof}
  Let $T \in \SSKD(\alpha)$. By \cite[(3.16)]{AG21}, $\varphi_j(T)$ for a tabloid $T$ counts the number of cells of $T$ with entry $j$ that are not $j$-paired (or $0$-paired in the case $j=n$). For $i \neq 0$, the column sets of $T$ and $\D_i(T)$ coincide, hence the $j$-pairing of both tableaux will be the same for any $j$. For $i=0$, although the location of $j$-paired cells might be modified, the net quantity of pairs remains intact under $\D_0$. Thus, $\varphi_j(T)=\varphi_j(\D_i(T))$ for any $0\leq j <n$ as claimed.

  Now suppose $f_j(T) \neq 0$. Then $\varphi_j(T)>0$ and so $\varphi_j(\D_i(T))>0$ as well. In the finite embedding case, where $i>0$, $T$ and $\D_i(T)$ have the same column sets, and so $f_j$ will act on the same column for both. This ensures $f_j(T)$ and $f_j(\D_i(T))$ have the same column set. Since $\D_i(f_j(T))$ and $f_j(T)$ have the same column sets by definition of $\D_i$, we conclude $f_j(\D_i(T))$ and $\D_i(f_j(T))$ also agree on column sets, and so $f_j(\D_i(T)) = \D_i(f_j(T))$ by \cite[Proposition~2.9]{AG21}. Similarly for the affine embedding $\D_0$, the preservation of the $j$-pairing rule and the fact that $j,j+1$ compare the same with all letters $k\neq j,j+1$ ensures $f_j(\D_0(T))$ and $\D_0(f_j(T))$ have the same column sets. Thus \cite[Proposition~2.9]{AG21} again ensures $f_j(\D_i(T)) = \D_i(f_j(T))$. 
\end{proof}

\begin{figure}[ht]
\begin{tikzpicture}[scale=1,every node/.style={scale=1}]

\begin{scope}[shift={(-4.5,0)}]
         \node at (0,0) (A) {$\vline\tableau{3\\ 2\\1&1&2}$};
         \node at (0,-2) (B) {$\vline\tableau{3\\ 2\\1&1&3}$};
         \node at (1,-4) (D) {$\vline\tableau{3\\ 2\\1&1&1}$};
      \draw[thick, red, ->] (A) -- (B) node[midway,right] {$\color{red}f_2$} ;
      \draw[thick, purple, ->] (B) -- (D) node[midway,right] {$\color{purple}f_0$} ;
  \end{scope}
\draw [thick, right hook->] (-3,-2) -- (-1.5,-2) node[midway, above]{$\D_1$};  

\begin{scope}[xscale=1]
         \node at (0,0) (A) {$\vline\tableau{3\\ 2&1&2\\1}$};
         \node at (0,-2) (B) {$\vline\tableau{3\\ 2&1&3\\1}$};
         \node at (-1,-4) (C) {$\vline\tableau{3\\ 2&2&3\\1}$};
         \node at (1,-4) (D) {$\vline\tableau{3\\ 2&1&1\\1}$};
         \node at (1,-6) (F) {$\vline\tableau{3\\ 2&2&1\\1}$};
         \node at (1,-8) (H) {$\vline\tableau{3\\ 2&2&2\\1}$};
      \draw[thick, red, ->] (A) -- (B) node[midway,right] {$\color{red}f_2$} ;
      \draw[thick, blue, ->] (B) -- (C) node[midway,right] {$\color{blue}f_1$} ;
      \draw[thick, purple, ->] (B) -- (D) node[midway,right] {$\color{purple}f_0$} ;
      \draw[thick, blue, ->] (D) -- (F) node[midway,right] {$\color{blue}f_1$} ;
      \draw[thick, blue, ->] (F) -- (H) node[midway,right] {$\color{blue}f_1$} ;
  \end{scope}
\draw [thick, right hook->] (1.5,-2) -- (3,-2) node[midway, above]{$\D_0$};  
  
\begin{scope}[shift={(5,0)}, xscale=1, yscale=1]
         \node at (0,0) (A) {$\vline\tableau{&\\ 2&1&2\\1&3}$};
         \node at (0,-2) (B) {$\vline\tableau{&\\ 2&1&3\\1&3}$};
         \node at (-1,-4) (C) {$\vline\tableau{&\\ 2&2&3\\1&3}$};
         \node at (1,-4) (D) {$\vline\tableau{&\\ 2&1&1\\1&3}$};
         \node at (-1,-6) (E) {$\vline\tableau{&\\ 2&2&3\\1&1}$};
         \node at (1,-6) (F) {$\vline\tableau{&\\ 2&2&1\\1&3}$};
         \node at (-1,-8) (G) {$\vline\tableau{&\\ 2&2&1\\1&1}$};
         \node at (1,-8) (H) {$\vline\tableau{&\\ 2&2&2\\1&3}$};
         \node at (0,-10) (I) {$\vline\tableau{&\\ 2&2&2\\1&1}$};
      \draw[thick, red, ->] (A) -- (B) node[midway,right] {$\color{red}f_2$} ;
      \draw[thick, blue, ->] (B) -- (C) node[midway,right] {$\color{blue}f_1$} ;
      \draw[thick, purple, ->] (B) -- (D) node[midway,right] {$\color{purple}f_0$} ;
      \draw[thick, purple, ->] (C) -- (E) node[midway,right] {$\color{ purple}f_0$} ;
      \draw[thick, blue, ->] (D) -- (F) node[midway,right] {$\color{blue}f_1$} ;
      \draw[thick,  purple, ->] (E) -- (G) node[midway,right] {$\color{ purple}f_0$} ;
      \draw[thick, blue, ->] (F) -- (H) node[midway,right] {$\color{blue}f_1$} ;
      \draw[thick, blue, ->] (G) -- (I) node[midway,right] {$\color{blue}f_1$} ;
      \draw[thick,  purple, ->] (H) -- (I) node[midway,right] {$\color{ purple}f_0$} ;
      \end{scope}
  \end{tikzpicture}
  \caption{Examples of embedding maps for $\eta_{3,5} = (2,2,1)$, giving the filtration $\tilde{\mathcal{B}}_{s_0s_2}(\eta_{3,5}) \subset \tilde{\mathcal{B}}_{s_1s_0s_2}(\eta_{3,5}) \subset \tilde{\mathcal{B}}_{s_0s_1s_0s_2}(\eta_{3,5})$.}\label{fig:complete example}
\end{figure}
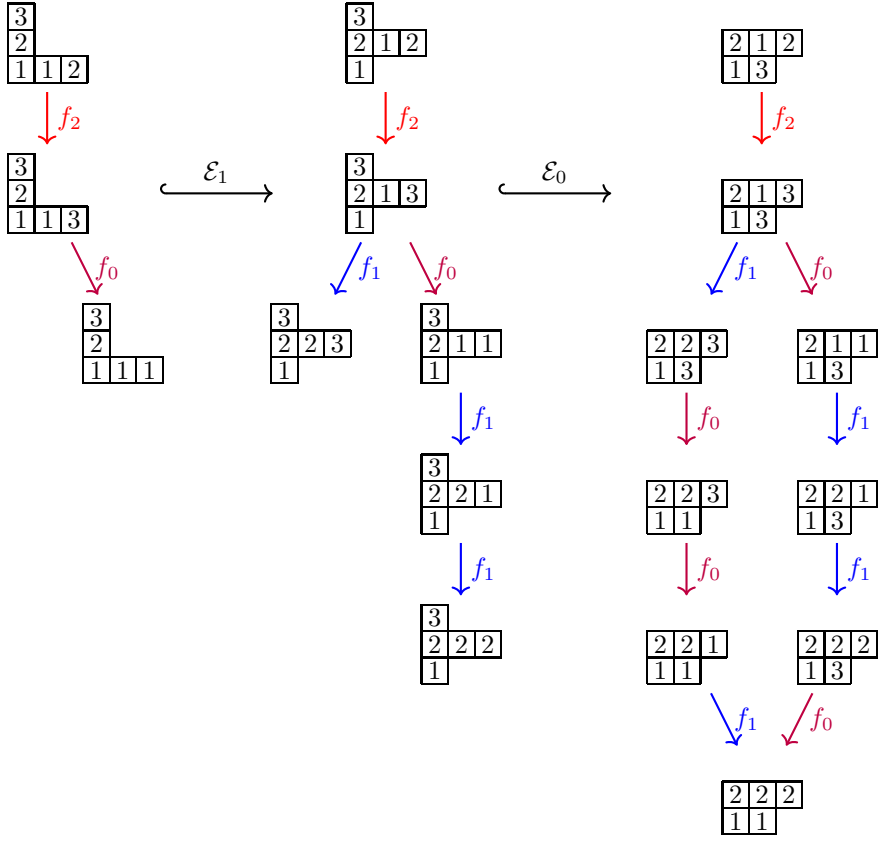

For example, Fig.~\ref{fig:complete example} shows the embedding for $\eta_{3,5} =(2,2,1)$ from $\tilde{\mathcal{B}}_{s_0s_2}(\eta_{3,5})$ to $\tilde{\mathcal{B}}_{s_1s_0s_2}(\eta_{3,5})$ via $\D_1$ and then into $\tilde{\mathcal{B}}_{s_0s_1s_0s_2}(\eta_{3,5})$ via $\D_0$. These embeddings display the Bruhat filtrations of the Demazure modules on their corresponding crystals. At each step $\D_i$ enlarges the preceeding crystal by complete $i$-strings only. 

We now present our main theorem, an explicit realization of the affine Demazure crystal on semistandard key tabloids.

\begin{theorem}\label{thm:isomorphism}
  For any weak composition $\alpha$ with $\alpha = w\cdot \eta_{n,k}$ and $w$ minimal length, there is a weight-preserving bijection $\theta_{\alpha} : \SSKD(\alpha) \rightarrow \aB_w(\eta_{n,k})$ that intertwines the crystal operators. That is, for $T\in\SSKD(\alpha)$, we have $f_i(T) \neq 0$ if and only if $f_i(\theta_{\alpha}(T)) \neq 0$ and, in this case, $f_i(\theta_{\alpha}(T))=\theta_{\alpha}(f_i(T))$.
\end{theorem}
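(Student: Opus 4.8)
The plan is to induct on the length $\ell(w)$ of a reduced word for $w$, at each stage enlarging the crystal by a single Demazure operator realized combinatorially by the embedding maps of Section~\ref{sec:embed}. Throughout I take $w$ to be the minimal length element with $w\cdot\eta_{n,k}=a$, so that $\aB_w(\lambda)$ depends only on $a$, and I set $\lambda=\eta_{n,k}$, which is dominant.

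For the base case $\ell(w)=0$ we have $a=\eta_{n,k}=\lambda$. Here $\aB_e(\lambda)=\mathfrak{D}_e\{\tilde u_\lambda\}=\{\tilde u_\lambda\}$ is a single element, so it suffices to check that $\SSKD(\eta_{n,k})$ is a single element of weight $\lambda$. A short non-attacking argument against the basement forces each cell in row $i$ to have entry at most $i$, and together with column-distinctness this forces the filling with entry $i$ throughout row $i$; its column reading word is $(1\,2\cdots n)^m(1\,2\cdots r)$, matching $\tilde u_\lambda$. Setting $\theta_{\eta_{n,k}}(T_\lambda)=\tilde u_\lambda$ gives the base isomorphism, the intertwining being vacuous since both crystals are single points on which every $f_i$ vanishes.

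For the inductive step I would write $w=s_i w'$ with $\ell(w')=\ell(w)-1$ and set $b=w'\cdot\eta_{n,k}$, so that $a=s_i\cdot b$ with $b\preceq a$ in Bruhat order; by induction there is a crystal isomorphism $\theta_b:\SSKD(b)\to\aB_{w'}(\lambda)$. By Theorem~\ref{thm:intertwine} the embedding $\D_i$ intertwines every crystal operator, hence is an injective crystal morphism, and by Lemma~\ref{lem:injective} its image $Y:=\D_i(\SSKD(b))$ contains every $i$-string head of $\SSKD(a)$. Using the intertwining of Theorem~\ref{thm:intertwine} and invertibility of $e_l,f_l$, one checks $Y$ is closed under all raising operators, so on each $i$-string $Y$ is the segment from the head downward; since $Y$ contains all heads, $\mathfrak{D}_i Y=\SSKD(a)$. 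This mirrors $\aB_w(\lambda)=\mathfrak{D}_i\aB_{w'}(\lambda)$, and because $\aB_{w'}(\lambda)$ is closed under $e_i$ its $e_i$-highest elements are exactly the $i$-string heads of $\aB_w(\lambda)$. Thus $\phi:=\theta_b\circ\D_i^{-1}:Y\to\aB_{w'}(\lambda)$ is an isomorphism carrying $i$-heads to $i$-heads, and since $\varphi_i$ is preserved (Theorem~\ref{thm:intertwine} with the inductive hypothesis) the $i$-strings below corresponding heads have equal length. I would then define $\theta_a(f_i^m T_0)=f_i^m\phi(T_0)$ for each $i$-head $T_0\in Y$ and $0\le m\le\varphi_i(T_0)$; this is a weight-preserving bijection $\SSKD(a)\to\aB_w(\lambda)$ intertwining $e_i,f_i$ by construction, weight-preservation following since $\D_i$ and $\theta_b$ preserve content and $f_i$ shifts weight identically on both sides.

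It remains to prove $\theta_a$ intertwines $f_j$ for $j\neq i$, which I expect to be the main obstacle. On the $i$-heads this is a diagram chase: for $T_0=\D_i(S_0)$ with $f_j(T_0)\neq0$, Theorem~\ref{thm:intertwine} gives $f_j(T_0)=\D_i(f_j(S_0))$, and decomposing $f_j(S_0)=f_i^{m'}(S_1)$ into its own $i$-head $S_1$ shows both $\theta_a(f_j(T_0))$ and $f_j(\theta_a(T_0))=\theta_b(f_j(S_0))$ equal $f_i^{m'}\theta_b(S_1)$, using that $\theta_b$ commutes with $f_i$ and $f_j$. The delicate point is commutation with $f_j$ at a general element $f_i^m T_0$ of an $i$-string with $m\ge1$: the interaction of $f_j$ with the $i$-string, trivial when $|i-j|\ge2$ and governed by the rank-two relations when $|i-j|=1$, must be shown to match on the two sides. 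For this I would use that both $\SSKD(a)$, via the finite crystal structure of \cite{AG} together with $f_0$, and $\aB_w(\lambda)$, as a genuine affine Demazure crystal, are seminormal $\asl_n$-crystals, and that $\theta_a$ preserves all string-length statistics $\varepsilon_l,\varphi_l$ while restricting to an isomorphism of each $i$-string matching heads; the universal local description of $f_j$ on an $i$-string then forces $\theta_a f_j=f_j\theta_a$ everywhere. Finally the statement for $e_j$ follows from that for $f_j$ by the inverse relation $e_j(S)=T\iff f_j(T)=S$, completing the induction.
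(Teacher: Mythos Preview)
Your inductive strategy via the embedding maps $\D_i$ is precisely the paper's, and your treatment of the base case and the extension of the bijection along $i$-strings match it closely. In fact the paper's own proof is terser than yours at exactly the point you flag as the main obstacle---the intertwining of $f_j$ for $j\ne i$ on elements outside the image $Y=\D_i(\SSKD(b))$---and simply asserts, citing Theorem~\ref{thm:intertwine}, that ``we may extend $\theta_a$ to a bijection $\theta_{s_i\cdot a}$ as claimed'' without spelling out the commutation on the new elements.

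Your proposed resolution of that step, however, has a genuine gap. Seminormality (meaning $\varepsilon_l,\varphi_l$ are the actual string lengths) is far too weak to yield any ``universal local description of $f_j$ on an $i$-string'': that description exists only for \emph{normal} crystals, i.e.\ those satisfying the Stembridge local axioms, and you have no independent verification that $\SSKD(a)$ equipped with the newly defined affine operator $f_0$ satisfies those axioms. Knowing $\SSKD(a)$ is a normal $\asl_n$-crystal would already deliver the isomorphism to $\aB_w(\lambda)$ essentially for free, so invoking rank-two relations here is circular. The paper does not fill this gap either; a more honest line is to note that the proof of Theorem~\ref{thm:intertwine} actually shows $\D_i$ preserves the full $j$-pairing (hence $\varepsilon_j$ as well as $\varphi_j$) and therefore also intertwines the $e_j$, giving $Y$ the structure of a Demazure subcrystal of $\SSKD(s_i\cdot b)$---but turning this into a complete argument that the extended map commutes with every $f_j$ still requires work that neither your proposal nor the paper supplies.
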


\begin{proof}
  For $w$ length $0$, observe $\SSKD(\eta_{n,k})$ contains the single tabloid $U_{\eta_{n,k}}$ of shape $\eta_{n,k}$ with all entries equal to their row index. The column reading word (bottom to top, left to right) of $U_{\eta_{n,k}}$ coincides with the word for affine highest weight element $\tilde{u}_{\eta_{n,k}}$. Thus $U_{\eta_{n,k}}$ and $\tilde{u}_{\eta_{n,k}}$ have the same weight and, moreover, $\varphi_i(\tilde{u}_{\eta_{n,k}}) = \varphi_i(U_{\eta_{n,k}})$ for all $i$. Thus we proceed by induction on the length of $w$, assuming a weight-preserving bijection $\theta_{\alpha} : \SSKD(\alpha) \rightarrow \aB_w(\eta_{n,k})$ that intertwines the crystal operators and that preserves string lengths, and we consider $s_i \cdot \alpha \succ \alpha$.

  The Demazure operator $\mathfrak{D}_i$ gives the inclusion $\aB_w(\eta_{n,k}) \subset \aB_{s_i w}(\eta_{n,k})$ where every $b' \in \aB_{s_i w}(\eta_{n,k}) \setminus \aB_w(\eta_{n,k})$ can be written uniquely as $b' = f_i^k(b)$ for some $b\in\aB_w(\eta_{n,k})$ and $k>0$. By Theorem~\ref{thm:embedding-well-defined}, the embedding map $\D_i$ gives the inclusion $\D_i(\SSKD(\alpha)) \subset \SSKD(s_i \cdot \alpha)$. By Lemma~\ref{lem:injective}, every $T' \in \SSKD(s_i \cdot \alpha) \setminus \D_i(\SSKD(\alpha))$ can be written uniquely as $T' = f_i^k(\D_i(T))$ for some $T\in\SSKD(\alpha)$ and $k>0$. By induction, if $\theta_{\alpha}(T) = b$, then $\varphi_i(T) = \varphi_i(b)$, and so by Theorem~\ref{thm:intertwine}, $\varphi_i(\D_i(T)) = \varphi_i(b)$ as well. Thus we have a weight-preserving bijection between $\SSKD(s_i \cdot \alpha) \setminus \D_i(\SSKD(\alpha))$ and $\aB_{s_i w}(\eta_{n,k}) \setminus \aB_w(\eta_{n,k})$. By Theorem~\ref{thm:intertwine}, the crystal operators intertwine with $\D_i$, and so we may extend $\theta_{\alpha}$ to a bijection $\theta_{s_i \cdot \alpha}$ as claimed. 
\end{proof}

Notice in the proof above we make explicit use of the affine highest weight element $\tilde{u}_{\eta_{n,k}}$. In fact, we can describe the image of $\tilde{u}_{\eta_{n,k}}$ in $\SSKD(\alpha)$ for any suitable weak composition $\alpha$ as follows.

\begin{proposition}
  For a weak composition $\alpha$ of length $n$, define $\tilde{U}_{\alpha}$ to be the filling of the diagram of $\alpha$ by columns left to right, bottom to top with entries $1, \ldots, n$ repeating as needed. Then $\tilde{U}_{\alpha} \in \SSKD(\alpha)$ with $\wt(\tilde{U}_{\alpha}) = \eta_{n,k}$ where $k = |\alpha|$.
  \label{prop:hwt}
\end{proposition}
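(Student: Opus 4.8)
The plan is to argue directly from the column-reading description of $\tilde{U}_{a}$, tracking each entry through its position in the reading word. Writing $k=mn+r$ with $0\le r<n$, the definition fills the $k$ cells in reading order with $1,2,\dots,n$ repeated cyclically, so each of $1,\dots,r$ occurs $m+1$ times and each of $r+1,\dots,n$ occurs $m$ times; this is exactly $\wt(\tilde{U}_{a})=\eta_{n,k}$ by \eqref{eq:eta}, settling the weight claim immediately. For the membership $\tilde{U}_{a}\in\SSKD(a)$ the central device is a formula for the reading position of a cell. Folding the basement in as a full ``column $0$'' of height $n$ (its entries, the row indices, are consistent with the cyclic fill), the cell in row $i$, column $c$ sits at reading position $p(i,c)=\sum_{0\le c'<c}L_{c'}+\#\{\,j\le i:a_j\ge c\,\}$, where $L_{c'}=\#\{j:a_j\ge c'\}$, and its entry is $p(i,c)$ reduced to $\{1,\dots,n\}$ modulo $n$. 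The whole proof then rests on two elementary counting identities for differences of reading positions, and the basement needs no separate treatment because it is built into $p$.

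First I would dispatch the non-attacking condition. Two cells in the same column have reading positions differing by $\#\{i_1<j\le i_2:a_j\ge c\}\in\{1,\dots,n-1\}$, hence distinct entries. For cells $(i,c)$ and $(i',c+1)$ in adjacent columns with the left one strictly higher ($i>i'$), the position difference equals $\#\{j>i:a_j\ge c\}+\#\{j\le i':a_j\ge c+1\}$; the second count is at least $1$ since $(i',c+1)$ exists, while the $i-i'\ge 1$ rows $j$ with $i'<j\le i$ lie in neither set, so the difference lies in $\{1,\dots,n-1\}$ and the entries again differ. Thus no two equal entries attack, so $\tilde{U}_{a}$ is non-attacking.

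The heart of the argument is the absence of co-inversion triples, and this is where I expect the real work. I would first record the orientation dictionary from Fig.~\ref{fig:inv}: for a triple with cells $i,j,k$ in the standard positions, being a co-inversion triple is equivalent --- for \emph{both} types --- to the three entries increasing in the cyclic order $i\to j\to k$, which in turn means meeting $i,j,k$ in that cyclic order as one travels the residue cycle $1\to 2\to\cdots\to n\to 1$ in the direction of increasing reading position (with the tie-break ``rightmost is smaller'' for equal entries). For a Type~I triple $X=(i_1,c)$, $Y=(i_1,c+1)$, $Z=(i_2,c)$ with $i_2>i_1$, the offsets from $X$ are $\delta:=p(Z)-p(X)=\#\{i_1<j\le i_2:a_j\ge c\}$ and $\epsilon:=p(Y)-p(X)=\#\{j>i_1:a_j\ge c\}+\#\{j\le i_1:a_j\ge c+1\}$. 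Since $\{i_1<j\le i_2:a_j\ge c\}\subseteq\{j>i_1:a_j\ge c\}$ and $\#\{j\le i_1:a_j\ge c+1\}\ge 1$ (it counts $j=i_1$, as $Y$ exists), I get $\epsilon\ge\delta+1$. Travelling in the increasing direction from $X$ one therefore meets $Z$ strictly before $Y$, i.e.\ the cyclic order is $X\to Z\to Y$, not $X\to Y\to Z$, so the triple is not a co-inversion triple. The Type~II triples $X=(i_1,c)$, $Y=(i_1,c+1)$, $Z=(i_2,c+1)$ with $i_2<i_1$ go the same way: $p(Z)-p(X)<p(Y)-p(X)$ because $\#\{j\le i_2:a_j\ge c+1\}<\#\{j\le i_1:a_j\ge c+1\}$ (the difference once more counting $j=i_1$), again yielding cyclic order $X\to Z\to Y$.

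I would close by checking the degenerate comparisons: an offset can equal $n$ exactly when $E(Y)=E(X)$, but then $Y$ lies to the right of $X$ and the tie-break preserves the cyclic order $X\to Z\to Y$, while the strict inequalities above rule out $E(Y)=E(Z)$. The main obstacle is precisely this bookkeeping --- correctly translating the geometric clockwise/counterclockwise definition of a co-inversion triple into the single statement ``the offset from $X$ to $Z$ is strictly smaller than the offset from $X$ to $Y$,'' and verifying that the two counting inequalities survive the reduction modulo $n$ together with the equal-entry tie-break. Once the dictionary is fixed, the inequalities $\delta<\epsilon$ and its Type~II analogue are short, and they are the whole point: the extra cell that makes the long row long is exactly the $+1$ forcing the good orientation.
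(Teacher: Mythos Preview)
Your argument is correct, and it is a genuinely different (and somewhat cleaner) route than the paper's. The paper verifies the weight via the reading word just as you do, then checks non-attacking by a box-count (if the same value $x$ sat in columns $c$ and $c+1$ with the right one strictly lower, column $c$ would need more than $n$ cells), and finally handles co-inversion triples by case analysis on the relative sizes of $i,j,k$: in every co-inversion case, the consecutivity of values up a column forces a second copy of the entry $j$ to appear in the left column strictly between the two left cells, which then attacks the right cell --- so the co-inversion check is \emph{reduced back to} the non-attacking property.

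Your approach instead introduces a single device, the reading-position function $p(i,c)$ with the basement folded in as column $0$, and reads off both the non-attacking condition and the triple orientations from the elementary inequalities $1\le\delta<\epsilon\le n$ on position offsets. What this buys you is uniformity: the basement needs no separate bookkeeping, the two triple types are handled by the same inequality (the ``$+1$'' coming from the existence of $Y$), and the tie-break when $\epsilon=n$ falls out of the same picture. What the paper's approach buys is economy of setup --- no position formula, no offset dictionary --- at the price of several small case distinctions and an implicit appeal to the structure of consecutive values along a column. Both are short; yours is the more transparent mechanism.
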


\begin{proof}
By definition the reading word of $\tilde{U}_{\alpha}$ is $(12\dots n)^m 12\dots r$ where $k = mn+r$ is the unique decomposition described beneath equation \eqref{eq:eta}. It clearly follows that wt$(\tilde{U}_{\alpha})$ is exactly $\eta_{n,k}$. 

Now, since each column of $\tilde{U}_{\alpha}$ has at most $n$ rows then by construction no column will have the same entry twice. Moreover, if a value $x$ occurs in both columns $c$ and $c+1$ of $\tilde{U}_{\alpha}$, since the number of boxes weakly above $x$ in column $c$ plus the number of boxes strictly below $x$ in column $c+1$ must be precisely $n$, then the $x$ on the right must lie in a row weakly higher than the $x$ of the left (otherwise this would imply that column $c$ has more than $n$ boxes). Thus, $\tilde{U}_{\alpha}$ has no attacking cells. 

To see $ \tilde{U}_{\alpha}$ has no co-inversion triples consider three cells of Type I with entries $i,j,k$ as in the left image in Figure \ref{fig:inv}. Suppose that $i<k$. In order for these cells to form a co-inversion triple then necessarily $i < j <k$. However, this would imply that there is a cell between $i$ and $k$ in the left column with value $j$, which is impossible since we showed that $\tilde{U}_{\alpha}$ cannot have attacking cells. If instead $i>k$ then the cells form a co-inversion triple if either $k>j$ or $j>i$. However, both of these cases result in cells that attack $j$. Hence $\tilde{U}_{\alpha}$ has no co-inversion triples of Type II. Now consider a triple of cells of Type II with entries $i,j,k$ as in the right image of Figure \ref{fig:inv}. Once again, the cells form a co-inversion triple for $k<j$ only if $k<i<j$ and for $k<j$ only if $i<j$ or $k<i$. However, as in the cases above, by the definition of $\tilde{U}_{\alpha}$ these cases give rise to attacking cells. Hence, $\tilde{U}_{\alpha}$ has no attacking cells of Type II and is indeed a semistandard key tabloid of shape $\alpha$. 
\end{proof}

It is easy to see $\tilde{U}_{\alpha}$ is a highest weight element in $\SSKD(\alpha)$ for any weak composition $\alpha$. Indeed, $\tilde{U}_{\alpha}$ is the unique such filling that satisfies this property for all $\alpha$. That is, if there exists a word $w = w_1 \dots w_k$ such that for all weak compositions $\alpha$ its corresponding tabloid of shape $\alpha$, $U'_{\alpha}$, is always contained in $\SSKD(\alpha)$ and satisfies $e_i(U'_{\alpha})=0$ for all $1\leq i \leq n$, then $U'_{\alpha}=\tilde{U}_{\alpha}$. Moreover, under the bijections described in Theorem~\ref{thm:isomorphism}, we have $\theta_{\alpha}(\tilde{U}_{\alpha}) = \tilde{u}_{\eta_{n,k}}$.

%
\section{Characters}
%
\label{sec:characters}

Recall the specialized nonsymmetric Macdonald polynomials $E_{\alpha}(x_1,\ldots,x_n;q,0)$ include the parameter $q$. Combinatorially, this parameter is given by the major index statistic defined as follows. For a semi-standard key tabloid $T$, the \newword{major index} of $T$, denoted by $\maj(T)$, is the sum of the legs of all cells $c$ such that the entry in $c$ is strictly less than the entry immediately to its right, as seen in Fig.~\ref{fig:maj}.

\begin{figure}[ht]
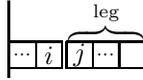

  \begin{displaymath}
    \begin{array}{c}
      \vline\tableau{_{\cdots}}\hss\tableau{i} \hspace{-.5mm} \overbrace{\tableau{j}\hss\tableau{_{\cdots}}\hss\tableau{ \ }}^{\mathrm{leg}} 
    \end{array}
  \end{displaymath}
  \caption{\label{fig:maj}The leg of a cell contributing to the major index, where $i<j$.}
\end{figure}

\begin{theorem}[\cite{HHL08}]
  The specialized nonsymmetric Macdonald polynomial is 
  \begin{equation}
    E_{\alpha}(x_1,\ldots,x_n;q,0) = \sum_{\substack{T \in \SSKD(\alpha)}} q^{\maj(T)} x_1^{\wt(T)_1} \cdots x_n^{\wt(T)_n} .
  \end{equation}
  \label{thm:mac}
\end{theorem}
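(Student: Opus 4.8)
The plan is to derive this identity as the $t=0$ specialization of the Haglund--Haiman--Loehr combinatorial formula, recalling that formula in full and then tracking exactly what survives as $t\to 0$. Their formula expresses $E_a(x_1,\ldots,x_n;q,t)$ as a sum over all \emph{non-attacking} fillings $\sigma$ of the diagram of $a$, weighted by $x^{\wt(\sigma)}\,q^{\maj(\sigma)}\,t^{\coinv(\sigma)}$ together with a product of rational factors of the form $\frac{1-t}{1-q^{1+\mathrm{leg}(u)}\,t^{1+\mathrm{arm}(u)}}$, one for each cell $u$ whose entry differs from the entry directly below it. First I would cut down the index set of the sum: the factor $t^{\coinv(\sigma)}$ vanishes at $t=0$ unless $\coinv(\sigma)=0$, that is, unless $\sigma$ carries no co-inversion triples. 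Combined with the non-attacking hypothesis already built into the formula, the fillings that survive are precisely the semistandard key tabloids $\SSKD(a)$ from the previous sections.

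Second, I would check that the rational coefficient collapses. Each factor $\frac{1-t}{1-q^{1+\mathrm{leg}(u)}\,t^{1+\mathrm{arm}(u)}}$ has numerator tending to $1$ and denominator tending to $1$ as $t\to 0$, since the arm is non-negative and hence $t^{1+\mathrm{arm}(u)}$ carries a strictly positive power of $t$. Thus the entire product specializes to $1$, no denominators are introduced, and the limit is a genuine polynomial with exactly the claimed monomial weights $x_1^{\wt(T)_1}\cdots x_n^{\wt(T)_n}$. Combining these two reductions, the $t=0$ formula becomes $\sum_{T\in\SSKD(a)} q^{\maj(T)}\,x^{\wt(T)}$, matching the statement except for the meaning of the exponent of $q$.

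The remaining, and main, task is to reconcile the two descriptions of the $q$-statistic. The Haglund--Haiman--Loehr major index is defined through \emph{descents} --- cells whose entry strictly exceeds the entry immediately below --- with each descent contributing one plus its leg. Under the coordinate conventions used here (rows horizontal, entries read up the columns from left to right), such a descent corresponds exactly to a cell whose entry is strictly less than the entry immediately to its right, and the HHL leg becomes the horizontal leg depicted in Fig.~\ref{fig:maj}. I would make this dictionary explicit by transposing the HHL diagram into the convention of this paper and verifying cell by cell that descent cells and their leg contributions are carried to the summands defining $\maj(T)$ here. Getting that convention translation exactly right, so the two major-index statistics agree term by term, is where the real care is needed; once it is in place, the displayed identity follows immediately from the three steps above.
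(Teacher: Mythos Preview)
The paper does not prove this statement at all: it is stated as a citation to \cite{HHL08} and left without argument. Your proposal is the standard and correct derivation of the $t=0$ specialization from the full Haglund--Haiman--Loehr formula, so in that sense you are supplying strictly more than the paper does. The three reductions you outline---killing fillings with $\coinv>0$, collapsing the rational factors to $1$, and translating the descent/leg conventions---are exactly the right steps, and you are right that the only subtlety is the bookkeeping in the last one; the paper's convention (diagram drawn with rows horizontal, leg counted to the right, ascent along a row rather than descent down a column) is a transposition of the HHL picture, and once that dictionary is written down the statistics match term by term.
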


In \cite{AG21}, we show the major index is constant on connected components of the \emph{finite} Demazure crystal on semistandard key tabloids. By Theorem~\ref{thm:isomorphism}, the affine Demazure crystal on semistandard key tabloids is connected, and so we wish to understand the role of the parameter $q$ in this context. 

An \emph{energy function} on a crystal $\mathcal{B}$ is a function $H: \mathcal{B} \otimes \mathcal{B} \to \mathbb{Z}$ satisfying the following conditions for all $0\leq i <n$ and $b_1 \otimes b_2 \in \mathcal{B} \otimes \mathcal{B}$ such that $e_i(b_1\otimes b_2) \neq 0$. 
\begin{equation}
H(e_i(b_1 \otimes b_2)) = \begin{cases}
H(b_1 \otimes b_2) & i\neq 0 \\
H(b_1 \otimes b_2) +1  & i=0, \varphi_0(b_1) \geq \varepsilon_0(b_2) \\
H(b_1 \otimes b_2) -1  & i=0, \varphi_0(b_1) < \varepsilon_0(b_2).
\end{cases}
\end{equation}
Given two crystals $\B,\B'$, there is a unique map $R$, called the \emph{combinatorial $R$-matrix}, from $\B\otimes\B'$ to $\B'\otimes\B$ that intertwines the crystal operators \cite{Kas02}. Using the combinatorial $R$-matrix and a local energy function $H$, we may consider the \emph{global energy function} $E:\mathcal{B}^{\otimes n} \to \mathbb{Z}$ such that, 
\begin{equation}
E(b_1 \otimes \dots \otimes b_n) = \sum_{i=1}^{n-1} (n-i) \cdot H(b_{i} \otimes b_{i+1}).
\end{equation}
This produces an important grading on finite dimension modules that connects to the major index statistic above as follows. 

\begin{proposition}\label{prop:lowering-energy}
 The major index $\maj: \SSKD(\alpha) \to \mathbb{Z}$ is a global energy function.
\end{proposition}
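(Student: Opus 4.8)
The plan is to move the statement onto the tensor crystal and there show that $\maj$ coincides with the global energy function $E$. By Theorem~\ref{thm:isomorphism} the bijection $\theta_a$ identifies $\SSKD(a)$ with the connected affine crystal $\aB_w(\lambda)\subseteq\aB(n)^{\otimes k}$, where $k=|a|$, so $\maj$ may be regarded as a function on this tensor power of the standard crystal. First I would make the local energy explicit: evaluating $\varepsilon_0$ and $\varphi_0$ on single boxes and solving the recursion in the definition of $H$ shows that $H(\boxed{a}\otimes\boxed{b})$ depends only on whether $a<b$ or $a\geq b$, and this value then determines $E$ on all of $\aB(n)^{\otimes k}$ through the displayed formula.

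The organizing principle is that both $\maj$ and $E$ are constant on the finite ($\fsl_n$-)components obtained by deleting the $0$-edges. For $\maj$ this is exactly the result of \cite{AG} recalled above, and for $E$ it follows from the invariance of the local energy under $e_i$ with $i\neq 0$ together with a short check that the weighted sum defining $E$ is unchanged when a single $e_i$ ($i\neq 0$) is applied. Each function therefore descends to the set of finite components, and because the affine crystal is connected each is determined by its increments across the $0$-edges once a single value is fixed. I would fix that value at the affine highest weight $\tilde{U}_a$ of Proposition~\ref{prop:hwt}, reducing the proposition to showing that $\maj$ changes along every $0$-edge by the amount prescribed by the energy formula for $E$.

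The heart of the argument is thus the comparison of $\maj(f_0(T))-\maj(T)$ with the change in $E$ predicted by its formula. Here I would invoke the explicit description of $f_0$ in Definition~\ref{def:affine-lowering-tabloid}: $f_0$ sends the leftmost unpaired $n$ to $1$ and swaps $n\leftrightarrow 1$ in the designated neighboring columns. The only descents whose status can change are those involving the altered cell together with its horizontal neighbors, so I would track precisely which descents are created or destroyed and how their legs contribute, using Lemma~\ref{lem:pairing-rows} to control the relative lengths of the rows of the $0$-paired cells. Crucially, these relative row lengths are the same data that enter the $\varphi_0$ versus $\varepsilon_0$ comparison in the energy rule, which is what makes the two increments agree.

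I expect this last step to be the main obstacle. The difficulty is that $\maj$ is assembled out of legs, which are global row-length quantities, whereas $f_0$ and the energy rule are local, and the swaps built into $f_0$ relocate entries between columns; one must check that the descent-and-leg bookkeeping is stable under these swaps and reassembles into exactly the increment dictated by the weighted formula for $E$. Lemma~\ref{lem:pairing-rows} is the key tool here, and the non-attacking and co-inversion-free conditions defining $\SSKD(a)$ are what exclude the configurations that would otherwise break the count. Once the increments are matched, connectivity of the crystal together with the chosen normalization at $\tilde{U}_a$ finishes the proof.
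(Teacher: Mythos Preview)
Your plan is viable but takes a much longer road than the paper. The paper's proof uses neither the bijection $\theta_a$, nor connectivity of the affine crystal, nor any analysis of $f_0$; it is a direct formula match. For a single-row shape of length $k$ with entries $b_1,\ldots,b_k$, the leg of the $i$th cell is $k-i$, so the definition of $\maj$ unwinds as
\[
\maj(T)\;=\;\sum_{i=1}^{k-1}(k-i)\cdot\maj\!\left(\,\boxed{b_i}\,\boxed{b_{i+1}}\,\right),
\]
and the two-cell base case is checked by hand to satisfy the local energy recursion, so that $\maj\!\left(\,\boxed{b_i}\,\boxed{b_{i+1}}\,\right)=H(b_i\otimes b_{i+1})$ and the displayed sum is literally $E(b_1\otimes\cdots\otimes b_k)$. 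For multi-row shapes the paper simply observes that $\maj$ is additive over rows, and the general claim follows.

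So the paper never confronts the obstacle you identify: there is no $f_0$ bookkeeping, no tracking of which descents are created or destroyed under the swaps in Definition~\ref{def:affine-lowering-tabloid}, and Lemma~\ref{lem:pairing-rows} is not invoked at all. Your increment-matching approach is more structural and would give a crystal-theoretic explanation of \emph{why} $\maj$ behaves as an energy function along every edge of the crystal, but it hinges on exactly the computation you flag as hardest; the paper bypasses that entirely with a short syntactic identification of the two formulas.
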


\begin{proof}
  Let $\alpha$ be a weak composition consisting of one row of length $n$. If $n=2$ it is straight forward to see that $maj$ will correspond to an energy function for $\SSKD(\alpha)$. Now suppose that $n>2$ and for any $T \in \SSKD(\alpha)$ let $b_i$ be the entry in the $i^{th}$ column of $T$. Then, 
  \begin{align*}
    \maj\left(\hackcenter{\begin{ytableau} b_1 & \dots & b_n\end{ytableau}}\right) & = \sum_{i=1}^{n-1} \text{leg}(b_{i})\cdot \maj\left(\boxed{b_{i_{~}}}\boxed{b_{i+1}}\right)\\
    &= \sum_{i=1}^{n-1}(n-i)\cdot \maj\left(\boxed{b_{i_{~}}}\boxed{b_{i+1}}\right)\\
    &= \sum_{i=1}^{n-1} (n-i) \cdot H(b_i \otimes b_{i+1})\\
    &=E(b_1 \otimes \dots \otimes b_n).
  \end{align*}
  Since for any multi-row weak composition $\alpha$, the major index of any $T \in \SSKD(\alpha)$ is the sum of the major index along each row of $T$, then the general result follows immediately. 
\end{proof}

Nakayashiki and Yamada \cite{NY97} first made the connection between Hall--Littlewood polynomials, given by $P_{\lambda}(X;0,t)$, and $\asl$ by showing the celebrated \emph{charge statistic} \cite{LS78,But86,But94} is (up to rescaling) an energy function in solvable lattice models. For details relating Hall--Littlewood polynomials with the nonsymmetric Macdonald polynomials specialized at $t=0$, see \cite[Cor.~5.7]{Ass18}.

In \cite{San00}, Sanderson proves that specialized nonsymmetric Macdonald polynomials are characters of affine Demazure modules by defining a family of operators $H_i$ for $0\leq i <n$ on $\mathbb{Z}[q,q^{-1}][X]$, previously introduced by Knop \cite{Kno97} and Sahi \cite{Sah96}, which generate these polynomials and satisfy the relation $H_iE_{\alpha}(X;q,0)= E_{s_i\cdot \alpha}(X;q,0)$ for each $i$ (c.f. \cite[Theorem~1]{San00}). Our embedding operators are precisely crystal theoretic lifts of Sanderson's operators, thus an immediate consequence of our work is a new proof of Sanderson's result \cite[Theorem~6]{San00}.

\begin{corollary}
  Given $\alpha = w \cdot \eta_{n,k}$, we have $E_{\alpha}(x_1,\ldots,x_n;q,0) = \mathrm{ch}(\aB_w(\eta_{n,k}))$.
  \label{cor:sanderson}
\end{corollary}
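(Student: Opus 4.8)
The plan is to prove Corollary~\ref{cor:sanderson} by combining the combinatorial formula for specialized nonsymmetric Macdonald polynomials with the crystal isomorphism established in Theorem~\ref{thm:isomorphism}. The target identity $E_a(x_1,\ldots,x_n;q,0) = \mathrm{ch}(\aB_w(\eta_{n,k}))$ equates a generating polynomial indexed by tabloids with the graded character of an affine Demazure crystal, so the natural strategy is to show both sides are generating functions over the same underlying set with matching weight and matching $q$-grading.

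First I would invoke Theorem~\ref{thm:mac}, which expresses $E_a(x_1,\ldots,x_n;q,0)$ as $\sum_{T\in\SSKD(a)} q^{\maj(T)} x^{\wt(T)}$. The $x$-degree of each term is recorded by $\wt(T)$, and the $q$-degree by $\maj(T)$. Next I would use Theorem~\ref{thm:isomorphism}, which provides a weight-preserving bijection $\theta_a:\SSKD(a)\to\aB_w(\eta_{n,k})$ intertwining the crystal operators, to identify the underlying sets and to match the $x$-weights term by term. This already shows the two sides agree after setting $q=1$; the remaining work is to account for the $q$-grading.

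The key step is to reconcile the statistic $\maj$ with the grading used to define $\mathrm{ch}(\aB_w(\eta_{n,k}))$ as a $q$-graded character. Here Proposition~\ref{prop:lowering-energy} is exactly the needed ingredient: it identifies $\maj$ with the global energy function $E$ on the crystal. Since the degree variable $q$ in the affine Demazure character is precisely the energy grading (the affine grading coming from the $0$-edges, as encoded by the local energy function $H$ and the combinatorial $R$-matrix), matching $\maj$ to $E$ under the bijection $\theta_a$ forces the $q$-degrees to agree. Because $\theta_a$ intertwines all crystal operators, including the affine $f_0$, it is compatible with the energy function, and so $\maj(T)$ and the energy of $\theta_a(T)$ coincide up to the normalization fixed at the highest weight element.

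I expect the main obstacle to be the bookkeeping in this last identification: one must verify that $\maj$ and the crystal energy function agree not merely as abstract functions (which Proposition~\ref{prop:lowering-energy} gives) but with the \emph{same normalization} used in the standard definition of the graded affine Demazure character. Concretely, one should check the value of $\maj$ on the highest weight tabloid $U_{\eta_{n,k}}$ (equivalently $\tilde U_a$ via Proposition~\ref{prop:hwt}) matches the base energy assigned to $\tilde u_{\eta_{n,k}}$, and then propagate this equality along crystal edges using the intertwining property of $\theta_a$ together with the energy recursion. Once the normalization is pinned down at the highest weight element and the two gradings are shown to transform identically under each $e_i$, the full identity $E_a(x;q,0)=\mathrm{ch}(\aB_w(\eta_{n,k}))$ follows immediately by summing over the common index set.
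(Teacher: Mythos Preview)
Your proposal is correct and follows essentially the same line as the paper: the corollary is presented there as an immediate consequence of Theorem~\ref{thm:mac}, Theorem~\ref{thm:isomorphism}, and Proposition~\ref{prop:lowering-energy}, exactly the three ingredients you invoke. The only nuance is one of emphasis: the paper frames the result by observing that the embedding operators $\D_i$ are crystal-theoretic lifts of the Knop--Sahi--Sanderson operators $H_i$ (so that the inductive recursions generating $E_a(X;q,0)$ and $\mathrm{ch}(\aB_w(\eta_{n,k}))$ agree step by step), whereas you argue more directly by transporting the $q$-grading through the bijection $\theta_a$ via the identification $\maj = E$; both routes use the same ingredients and yield the same conclusion.
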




As a final application, by forgetting the $0$-edges in the affine Demazure crystal on semistandard key tabloids, we recover the finite Demazure crystal on semistandard key tabloids from \cite{AG21}, and so we can also now interpret \cite[Theorem~4.9]{Ass18} in terms of characters.

\begin{corollary}
  The affine Demazure characters decompose as $q$-graded sums of finite Demazure characters.
\end{corollary}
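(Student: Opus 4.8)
The plan is to read off the decomposition directly from the affine crystal on $\SSKD(a)$ by forgetting its $0$-edges and tracking the major index as the grading parameter. By Theorem~\ref{thm:isomorphism} together with Corollary~\ref{cor:sanderson} and Theorem~\ref{thm:mac}, the affine Demazure character of $\aB_w(\eta_{n,k})$ is
\[
E_a(x_1,\ldots,x_n;q,0) = \sum_{T \in \SSKD(a)} q^{\maj(T)}\, x^{\wt(T)},
\qquad a = w\cdot\eta_{n,k}.
\]
So the task reduces to regrouping this sum into blocks that each contribute a single finite Demazure character scaled by a power of $q$.

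First I would forget the affine $0$-edges, retaining only the finite crystal operators $e_i,f_i$ for $1\le i<n$. This partitions $\SSKD(a)$ into finite connected components. By the result of \cite{AG}, each such component $C$ is isomorphic to a finite Demazure crystal $\B_{w'}(\mu)$ for some $\mu\in P^{+}$ and $w'\in W$, so its weight-generating function $\sum_{T\in C} x^{\wt(T)}$ equals the finite Demazure character $\key_{w'\cdot\mu}(x_1,\ldots,x_n)$. Next I would show the major index is constant on each finite component. This follows from Proposition~\ref{prop:lowering-energy}: since $\maj$ is a global energy function and the local energy $H$ is unchanged by $e_i$ for $i\neq 0$, the value $\maj(T)$ is preserved along every finite crystal operator, hence depends only on $C$. (Equivalently, one invokes the constancy of $\maj$ on finite components established directly in \cite{AG}.)

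Writing $\maj(C)$ for this common value, I can factor the sum as
\[
E_a(x;q,0) = \sum_{C} \sum_{T\in C} q^{\maj(T)} x^{\wt(T)} = \sum_{C} q^{\maj(C)} \key_{w'\cdot\mu}(x),
\]
a nonnegative $q$-graded sum of finite Demazure characters, as claimed. The content here is assembled almost entirely from results already in hand, so I expect no serious obstacle; the one point requiring care is the compatibility of the $q$-grading with the passage from the affine to the finite crystal — precisely the statement that $\maj$ is constant along finite strings. This is exactly why Proposition~\ref{prop:lowering-energy}, in whose energy-function formalism the $0$-edges are the only ones that shift $H$, is the crucial input rather than a mere remark.
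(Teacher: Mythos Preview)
Your proposal is correct and matches the paper's own argument essentially verbatim: the paper derives the corollary by forgetting the $0$-edges in the affine tabloid crystal to recover the finite Demazure crystal of \cite{AG}, using that $\maj$ is constant on each finite component, and then identifying the affine Demazure character with $E_a(X;q,0)$ via Corollary~\ref{cor:sanderson} and Theorem~\ref{thm:mac}. Your write-up is simply a more detailed rendering of the same logic.
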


For example, taking the $q$-graded character of the Demazure crystals in Fig.~\ref{fig:complete example}, the characters of 
$\tilde{\mathcal{B}}_{s_0s_2}(2,2,1)$, $\tilde{\mathcal{B}}_{s_1s_0s_2}(2,2,1)$, and $\tilde{\mathcal{B}}_{s_0s_1s_0s_2}(2,2,1)$ are precisely
\begin{align*}
E_{(3,1,1)}(X;q,0) &= q\kappa_{(2,1,2)}(X)+\kappa_{(3,1,1)}(X),\\
E_{(1,3,1)}(X;q,0) &= q\kappa_{(1,2,2)}(X)+\kappa_{(1,3,1)}(X),\\
E_{(2,3,0)}(X;q,0) &= q^2\kappa_{(1,2,2)}(X)+q\kappa_{(1,3,1)}(X)+q\kappa_{(2,2,1)}(X)+\kappa_{(2,3,0)}(X).
\end{align*}
In particular, the affine crystal operator $f_0$ connects the finite Demazure subcrytals of each affine Demazure crystal and, unlike the finite crystal operators, does not preserve the major index. Similarly, only the \emph{affine} embedding operator $\D_0$ changes the $q$-grading of each of the finite Demazure subcrystals, increasing it by a factor of $q$ each time. At the level of characters, we see that $\D_1$ and $\D_0$ recover the action of Sanderson's operators $H_1$ and $H_0$ on the specialized nonsymmetric polynomials.

Recall the Schur polynomials form an important basis for symmetric polynomials whose structure constants give the multiplicities of the irreducible components in the tensor product of irreducible representations. The structure constants for the Hall--Littlewood symmetric polynomials, on the other hand, are not nonnegative (see \cite[(III.3)]{Mac95}). We can understand this failure through the crystal interpretation of $E_{(\lambda_n,\ldots,\lambda_1)}(x_1,\ldots,x_n;q,0)$ as an affine \emph{Demazure} character since the tensor product of (affine) Demazure crystals is not, in general, a(n affine) Demazure crystal. Nevertheless, this perspective might lead to a better understanding of these Hall--Littlewood structure constants as tensor products are well-defined on crystals, even when the resulting structure is not well understood.

%
%

\bibliographystyle{amsplain} 
\bibliography{affine}


\end{document}